\definecolor{shadecolor}{RGB}{200,200,200}
\newtheorem{thm}{Theorem}
\newtheorem{lem}[thm]{Lemma}
\newtheorem{wn}[thm]{Corollary}
\newtheorem{Problem}[thm]{Problem}
\newtheorem{example}{Example}
\theoremstyle{remark}
\def\cer{{\rm cer}}
\def\cP{\mathcal{P}}
\title{{\textsc{Graphs with equal domination and certified domination numbers}}}
\date{\today}
\begin{document}

\begin{center}
\noindent\textbf{\Large Graphs with equal domination\\[3mm] and certified domination numbers} \end{center}

\vspace{10mm}\begin{center}
\noindent\textsc{Magda Dettlaff, Magdalena Lema\'{n}ska}\\[1mm]
Gda\'nsk University of Technology, Gda\'nsk, Poland\\
{\small \texttt{\{mdettlaff,magda\}@mif.pg.gda.pl}}\\[3mm]
\textsc{Mateusz Miotk, Jerzy Topp, Rados\l{}aw Ziemann, Pawe\l{}~\.Zyli\'nski}\\[1mm]
University of Gda\'{n}sk, Gda\'nsk, Poland\\
{\small \texttt{\{m.miotk,j.topp,r.ziemann,p.zylinski\}@inf.ug.edu.pl}}
\end{center}

\vspace{1cm}
\begin{abstract}
\noindent A set $D$ of vertices of a graph $G$ is a {\it dominating set\/} of $G$ if every vertex in $V_G-D$ is adjacent to at least one vertex in $D$. The {\it domination number} ({\it upper domination number}, respectively) of a graph $G$, denoted by $\gamma(G)$ ($\Gamma(G)$, respectively), is the cardinality of a smallest
(largest minimal, respectively) dominating set of $G$. A subset $D\subseteq V_G$ is called a {\it certified dominating set\/} of $G$ if $D$ is a dominating set of $G$ and every vertex in $D$ has either zero or at least two neighbors in $V_G-D$. The cardinality of a~smallest (largest minimal, respectively) certified dominating set of $G$ is called the {\em certified} $(${\em upper certified}, respectively$)$  {\em domination number} of $G$ and is denoted by $\gamma_{\rm cer}(G)$ ($\Gamma_{\rm cer}(G)$, respectively). In this paper relations between domination, upper domination, certified domination and
upper certified domination numbers of a graph are studied.

\medskip
\noindent{\bf Keywords:}  Domination, certified domination, ${\cal P}$-corona of a graph \\
{\bf \AmS \; Subject Classification:} 05C69
\end{abstract}

\section{Notation and definitions}

We generally follow the notation and terminology of \cite{HHS98}. For a graph $G$, the set of vertices is denoted by $V_G$ and the edge set by $E_G$. For a vertex $v \in V_G$,
the {\em open neighborhood} $N_G(v)$ of $v$ is the set of all vertices adjacent to $v$, and $N_G[v]=N_G(v)\cup \{v\}$ is the {\em  closed neighborhood\/} of $v$.
The {\em open neighborhood} of a set $X\subseteq V_G$ is $N_G(X)= \bigcup_{v \in X} N_G(v)$, while the {\em closed neighborhood} of $X$ is the set $N_G[X]= N_G(X) \cup X$.  For $X \subseteq V_G$ and $v\in X$, the set $N_G[v]-N_G[X-\{v\}]$ is denoted by $P\!N_G[v,X]$ and called the {\it private neighborhood} of $v$ with respect to $X$. Every vertex belonging to $P\!N_G[v, X]$ is called a {\em private neighbor\/} of $v$ with respect to $X$. By $P\!N_G(v,X)$ we denote the set $N_G(v)-N_G[X-\{v\}]$ and call it the {\em open private neighborhood} (of $v$ with respect to $X$). The {\em degree} of a vertex $v$ in $G$ is the number $d_G(v)= |N_G(v)|$. The number $\min\{d_G(v)\colon v\in V_G\}$ is the minimum degree of $G$ and is denote by $\delta(G)$. A vertex of degree 0 is called an isolated vertex, while a~vertex of degree one in $G$ is called a {\em leaf\/} of $G$. If $v$ is a~leaf, then its only neighbor is called a {\em support} of $v$. A~support is called a {\em strong support} or {\em weak support} depending on whether or not it is adjacent to at least two leaves.
We use $L_G$, $S_G$, $S_G^1$ and $S_G^2$ to denote the set of all leaves, supports, weak supports and strong supports of $G$, respectively.

Given a graph $G$, we say that a subset $D\subseteq V_G$ is a {\it dominating set\/} of $G$ if every vertex belonging to $V_G-D$ is adjacent to at least one vertex in $D$. The {\it domination number} ({\it upper domination number}, respectively) of a graph $G$, denoted by $\gamma(G)$ ($\Gamma(G)$, respectively), is the cardinality of a smallest
(largest minimal, respectively) dominating set of $G$. A dominating (minimal dominating, respectively) set of $G$ of minimum (maximum, respectively) cardinality is called a~$\gamma$-set ($\Gamma$-set, respectively) of $G$. A subset $D\subseteq V_G$ is called a {\it certified dominating set\/} of $G$ if $D$ is a dominating set of $G$ and every vertex belonging to $D$ has either zero or at least two neighbors in $V_G-D$. The cardinality of a~smallest (largest minimal, respectively) certified dominating set of $G$ is called the {\em certified} $(${\em upper certified}, respectively$)$  {\em domination number} of $G$ and is denoted by $\gamma_{\rm cer}(G)$ ($\Gamma_{\rm cer}(G)$, respectively). A certified dominating (minimal certified dominating, respectively) set of $G$ of minimum (maximum, respectively) cardinality is called a~$\gamma_{\rm cer}$-set ($\Gamma_{\rm cer}$-set, respectively) of $G$. For example, it is easy to observe that for the most common graph families, we have $\gamma(K_n)= \gamma_{\rm cer}(K_n)= \Gamma(K_n)= \Gamma_{\rm cer}(K_n)=1$ if $n\not=2$, $\gamma(P_n) = \gamma_{\rm cer}(P_n)= \lceil n/3\rceil$ and $\Gamma_{\rm cer}(P_n)= \lfloor(n-1)/2\rfloor = \Gamma(P_n)-1$ if $n\ge 5$, $\gamma(C_n) = \gamma_{\rm cer}(C_n)= \lceil n/3\rceil$ and $\Gamma(C_n)= \Gamma_\cer(C_n)=\lfloor {n}/{2}\rfloor$ if $n\geq 3$, $\Gamma(K_{1,n})=n$ (if $n\ge 1$) and $\gamma(K_{1,n})=\gamma_\cer(K_{1,n})=\Gamma_\cer(K_{1,n})=1$ if $n\ge 2$,
$\gamma(K_{m,n})=\gamma_\cer(K_{m,n})=2$ and $\Gamma(K_{m,n})=\Gamma_\cer(K_{m,n})=n$ if $2\leq m\leq n$.

It is obvious that for any graph $G$ we have the inequalities $\gamma(G)\leq \gamma_{\cer}(G) \leq \Gamma_{\rm cer}(G)$. Comparing the parameters $\gamma(G)$, $\gamma_{\rm cer}(G)$, and $\Gamma_{\rm cer}(G)$ to $\Gamma(G)$, we may have the following three strings of inequalities:

\begin{equation}
\gamma(G)\leq \Gamma(G)\leq \gamma_{\cer}(G)\leq \Gamma_{\cer}(G); \label{nierowno1}\end{equation}

\begin{equation}  \gamma(G)\leq \gamma_{\cer}(G)\leq \Gamma(G) \leq \Gamma_{\cer}(G);\label{nierowno2}\end{equation}

\begin{equation} \gamma(G)\leq \gamma_{\cer}(G)\leq \Gamma_{\cer}(G)\leq \Gamma(G).\label{nierowno3}\end{equation}

\bigskip Each of the strings (1)--(3) is possible.
For example, the inequalities~(\ref{nierowno1}) hold for the graph $G$ in Figure~\ref{rys-graphsGFH}. In this case it is easy  to check that $\gamma(G)=5$, $\Gamma(G)=6$, $\gamma_{\rm cer}(G)=7$, $\Gamma_{\rm cer}(G)=8$, and the sets $\{v_1,v_2, v_3, v_5, v_6\}$, $\{v_5, v_8, v_{10},v_{11},v_{12}, v_{13}\}$, $\{v_1,v_2, v_3, v_5, v_6,v_{12},v_{13}\}$, $\{v_{1},v_{2},v_{3},$ $v_{5}, v_{8},v_{10}, v_{12},$ $v_{13}\}$ are  examples of $\gamma$-, $\Gamma$-, $\gamma_{\rm cer}$-, and $\Gamma_{\rm cer}$-sets of $G$, respectively. The graph $F$ in Figure~\ref{rys-graphsGFH} illustrates the inequalities~(\ref{nierowno2}). In this case one can check that
$\gamma(F)=3$, $\gamma_{\cer}(F)=4$, $\Gamma(F)=5$, $\Gamma_{\cer}(F)=6$, and the sets
$\{v_1,v_2, v_3\}$, $\{v_1,v_2, v_5, v_9\}$, $\{v_4, v_6, v_7, v_8, v_9\}$ and $\{v_1,v_2, v_4, v_6, v_7, v_9\}$ are  examples of $\gamma$-, $\gamma_{\rm cer}$-, $\Gamma$-, and $\Gamma_{\rm cer}$-sets of $F$, respectively. Finally, the inequalities (\ref{nierowno3}) hold for the graph $H$ in Figure~\ref{rys-graphsGFH}:  here $\gamma(H)=5$, $\gamma_{\cer}(H)=6$, $\Gamma_{\cer}(H)=7$, $\Gamma(H)=8$, and
$\{v_4,v_5,  v_7,v_{12}, v_{14}\}$, $\{v_4,v_5, v_6,v_7,v_{12},v_{14}\}$, $\{v_4,v_5,v_8, v_9,v_{10},v_{12},v_{14}\}$ and $\{v_1, v_2, v_3,$ $v_6, v_8, v_9, v_{10}, v_{13}\}$ are examples of $\gamma$-, $\gamma_{\rm cer}$-, $\Gamma_{\rm cer}$- and $\Gamma$-sets of $H$, respectively.

\begin{figure}[h!] \begin{center} \bigskip
{\epsfxsize=4.5in \epsffile{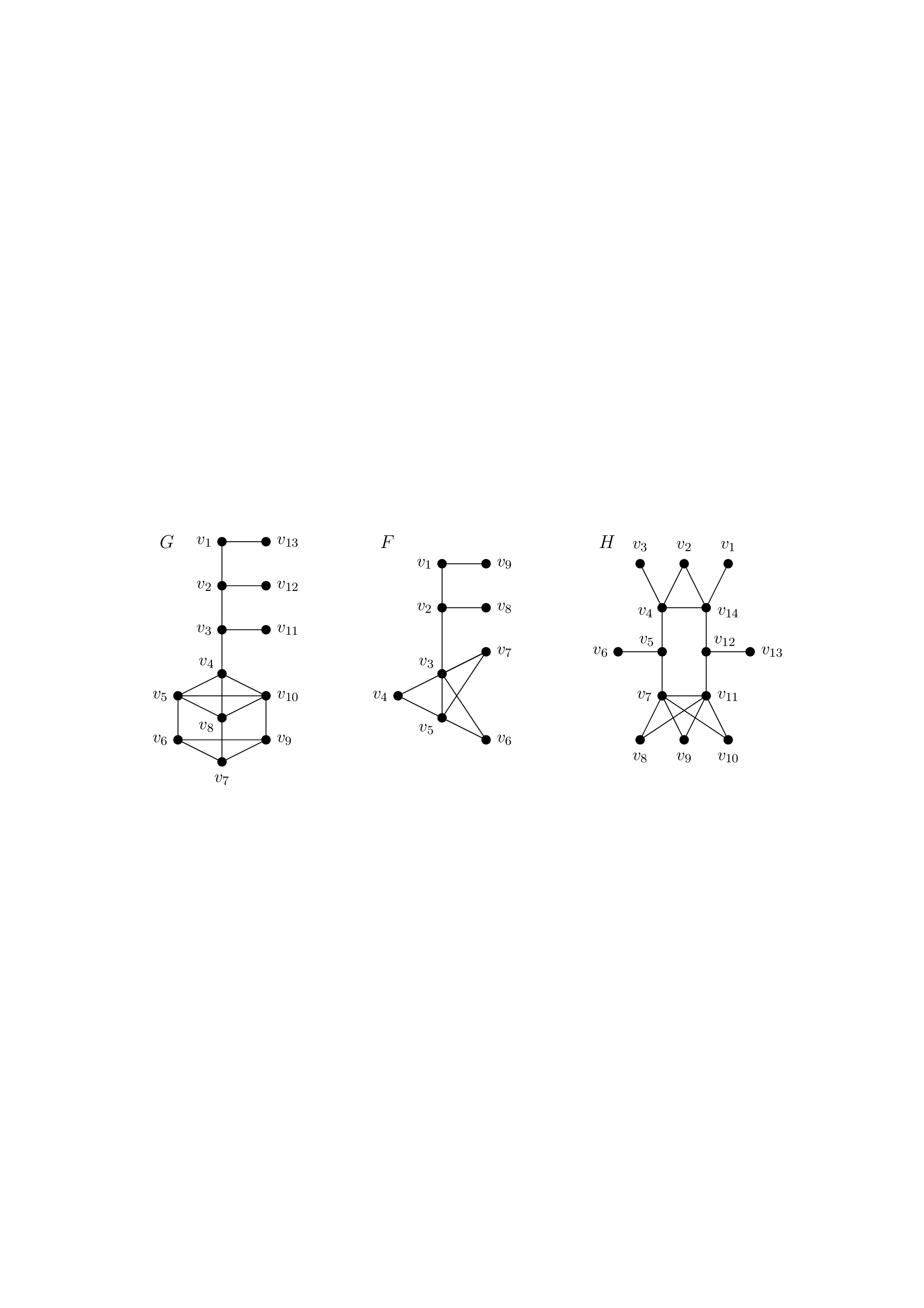}}

\vspace{-4mm}
\caption{Graphs $G$, $F$, and $H$.} \label{rys-graphsGFH}
\end{center}\end{figure}

Domination in graphs is one of the most fundamental and well-studied concepts in graph theory. The reader is referred to \cite{HHS98}, \cite{HHS98b} and \cite{HenningYeo} for more details on these important topics. The previously mentioned certified domination was introduced by Dettlaff et al. \cite{nasz} in order to describe some relations in social networks. In this paper we continue their studies of certified dominating sets and certified domination numbers of graphs. For different classes of graphs $G$ we establish  conditions for the equality of the domination number $\gamma(G)$ and the certified domination number $\gamma_{\rm cer}(G)$ of a graph $G$. Furthermore, we characterize all graphs $G$ for which  $\gamma(H)= \gamma_{\rm cer}(H)$ for each induced and connected subgraph $H\not=K_2$ of $G$. In addition, for a given graph $G$ we characterize all families ${\cal P}$ of partitions of vertex neighborhoods of $G$ for which $\gamma(G\circ {\cal P})= \gamma_{\rm cer}(G\circ {\cal P})$, where $G\circ {\cal P}$ is the ${\cal P}$-corona of $G$ defined in \cite{Dettlaff...Zylinski}.
The last part of the paper is concerned with main properties of the upper certified domination number $\Gamma_{\rm cer}(G)$ of $G$ and its relations to $\gamma_{\rm cer}(G)$ and $\Gamma(G)$. We conclude with some open problems.

\section{Graphs for which $\gamma_{\rm cer}=\gamma$}

In this section we study basic properties which guarantee equalities of domination and certified domination numbers. We begin with the following necessary and sufficient condition for the equality of domination and certified domination numbers of a graph.

\begin{thm} \label{obs_gamma_rowne} Let $G$ be a connected graph of order at least  three. Then  $\gamma(G)=\gamma_{\cer}(G)$ if and only if $G$ has a $\gamma$-set $D$ such that every vertex in $D$ has at least two neighbors in $V_G-D$. \end{thm}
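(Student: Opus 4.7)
The plan is to prove both directions directly, with the nontrivial content concentrated in the forward implication.

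For the easy direction ($\Leftarrow$), I would note that if $D$ is a $\gamma$-set in which every vertex has at least two neighbors in $V_G-D$, then by definition $D$ is a certified dominating set, so $\gamma_{\cer}(G)\le |D|=\gamma(G)$. Combined with the universal inequality $\gamma(G)\le \gamma_{\cer}(G)$ recorded in the introduction, equality follows.

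For the forward direction ($\Rightarrow$), assume $\gamma(G)=\gamma_{\cer}(G)$ and take any $\gamma_{\cer}$-set $D$. Since $|D|=\gamma_{\cer}(G)=\gamma(G)$ and $D$ is dominating, $D$ is automatically a $\gamma$-set; the only thing to verify is that no vertex of $D$ has \emph{zero} neighbors in $V_G-D$, which is the alternative permitted by the certified-domination definition. Suppose, for contradiction, that some $v\in D$ satisfies $N_G(v)\subseteq D$. Because $G$ is connected and $|V_G|\ge 3$, the vertex $v$ has at least one neighbor, so $v$ has a neighbor inside $D$. I would then argue that $D-\{v\}$ is still a dominating set: the vertex $v$ itself is dominated by some neighbor in $D-\{v\}$, and every other vertex of $V_G-D$ is dominated by the same element of $D$ as before (it could not have been dominated by $v$, since $v$ has no neighbors outside $D$). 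This produces a dominating set of size $\gamma(G)-1$, contradicting the minimality of $\gamma(G)$. Hence every vertex of $D$ must have at least two neighbors in $V_G-D$.

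The main (in fact only) potential obstacle is ruling out the ``zero neighbors in $V_G-D$'' option, and the argument above shows precisely where the hypotheses are used: connectedness together with $|V_G|\ge 3$ guarantees that such a $v$ has some neighbor, forcing it to lie in $D$ and permitting the removal of $v$ from $D$ without losing domination. Without the order-at-least-three assumption, $G=K_1$ or $G=K_2$ would offer counterexamples to the stated equivalence, which is consistent with the careful phrasing of the hypothesis. No delicate case analysis beyond this single dichotomy is required, so the proof should be short once the contradiction argument is set up.
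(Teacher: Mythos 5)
Your proof is correct and follows essentially the same route as the paper's: the reverse direction is the same two-line inequality chain, and your forward direction merely spells out in more detail the paper's parenthetical remark that a vertex $v\in D$ with $N_G(v)\subseteq D$ would make $D-\{v\}$ a smaller dominating set, contradicting minimality. No substantive difference.
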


\begin{proof} Assume that $\gamma(G)=\gamma_{\cer}(G)$. Let $D$ be a $\gamma_{\cer}$-set of $G$. Then the equality $\gamma(G)=\gamma_{\cer}(G)$ guarantees that  $D$ is also a $\gamma$-set of $G$. Now, let $v$ be a vertex in $D$. Since $v$ is not an isolated vertex, the set $N_G(v)\cap (V_G-D)$ is nonempty (as otherwise $D-\{v\}$ would be a~smaller dominating set of $G$). Consequently, since $D$ is a certified dominating set, we have $|N_G(v)\cap (V_G-D)| \ge 2$.

On the other hand, if $D$ is a $\gamma$-set of $G$ and $|N_G(v)\cap (V_G-D)| \ge 2$
for every $v\in D$, then $D$ is also a certified dominating set of $G$. Hence $\gamma_{\cer}(G)\leq |D|=\gamma(G)\leq \gamma_{\cer}(G)$ and therefore  $\gamma(G)=\gamma_{\cer}(G)$. \end{proof}

\begin{wn} \label{wn_gamma_rowne-1} Let $G$ be a connected graph of order at least three. If $G$ has an independent $\gamma$-set that contains no leaf of $G$, then $\gamma(G)=\gamma_{\cer}(G)$. \end{wn}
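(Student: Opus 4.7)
The plan is to apply Theorem~\ref{obs_gamma_rowne} directly: given the hypothesized independent $\gamma$-set $D$ containing no leaf of $G$, we need only verify that every vertex of $D$ has at least two neighbors in $V_G - D$.

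First, I would fix such a $\gamma$-set $D$ and an arbitrary vertex $v \in D$. Since $D$ is independent, no neighbor of $v$ lies in $D$, so $N_G(v) \subseteq V_G - D$, and hence $|N_G(v) \cap (V_G - D)| = d_G(v)$. Next, I would use connectedness of $G$ together with $|V_G| \ge 3$ to rule out $d_G(v) = 0$, and use the assumption that $D$ contains no leaf to rule out $d_G(v) = 1$. Therefore $d_G(v) \ge 2$, which gives $|N_G(v) \cap (V_G - D)| \ge 2$.

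Since this holds for every $v \in D$, Theorem~\ref{obs_gamma_rowne} applies and yields $\gamma(G) = \gamma_{\cer}(G)$. There is no real obstacle here; the only point worth flagging is the (trivial) use of connectedness with $|V_G|\ge 3$ to exclude isolated vertices of $D$, since without that hypothesis an independent $\gamma$-set could a priori contain an isolated vertex, which is neither a leaf nor of degree at least two.
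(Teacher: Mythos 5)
Your proposal is correct and follows exactly the same route as the paper: independence of $D$ gives $N_G(v)\subseteq V_G-D$, the no-leaf and connectedness (order $\ge 3$) hypotheses give $d_G(v)\ge 2$, and Theorem~\ref{obs_gamma_rowne} finishes. No differences worth noting.
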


\begin{proof} Let $D$ be an independent $\gamma$-set of $G$ that contains no leaf of $G$ and let $v$ be any vertex in $D$. The independence of $D$ implies that $N_G(v)\subseteq V_G-D$. Now, since $v$ is neither a leaf nor an isolated vertex, we have $|N_G(v)|\ge 2$ and therefore $|N_G(v)\cap (V_G-D)|=|N_G(v)|\ge 2$. From this and from Theorem \ref{obs_gamma_rowne} it immediately follows that $\gamma(G)=\gamma_{\cer}(G)$. \end{proof}

It was already proved in \cite{nasz} that $\gamma(G)=\gamma_{\cer}(G)$
for all graphs $G$ without leaves. Here we present another proof of that result.

\begin{wn} \label{theorem1-1} If $G$ is a graph in which $\delta(G)\ge 2$, then: \begin{enumerate} \item[\rm (1)] $G$ has a $\gamma$-set $D$ such that every vertex in $D$ has at least two neighbors in $V_G-D$; \item[\rm (2)] $\gamma(G)=\gamma_{\cer}(G)$. \end{enumerate} \end{wn}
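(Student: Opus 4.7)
The plan is to prove part (1) directly by an extremal argument on $\gamma$-sets, and then derive part (2) as an immediate consequence (either invoking Theorem~\ref{obs_gamma_rowne} componentwise, or simply noting that the set produced in (1) is by definition a certified dominating set).

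For part~(1), I would fix a $\gamma$-set $D$ of $G$ that minimizes the number of edges $|E(G[D])|$ induced inside~$D$; such a~$D$ exists because the collection of $\gamma$-sets is finite. I then claim that every vertex $v\in D$ has either $0$ or at least $2$ neighbors in $V_G-D$. Assume for contradiction that some $v\in D$ has exactly one neighbor $u\in V_G-D$. Since $\delta(G)\ge 2$, the vertex $v$ has at least one neighbor inside $D$. As $D$ is a minimum (hence minimal) dominating set, $v$ must have a private neighbor with respect to $D$; the unique candidate outside $D$ is $u$ (and $v$ itself is not a private neighbor because it has a neighbor in $D-\{v\}$), so $u$ has no neighbor in $D-\{v\}$.

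Now I would swap $v$ for $u$: set $D'=(D-\{v\})\cup\{u\}$. Since $u$ is the only $D$-private neighbor of $v$ and $u\in D'$, the set $D'$ dominates all vertices formerly dominated only by $v$; moreover $v$ itself is dominated by $u\in D'$. Thus $D'$ is a dominating set with $|D'|=|D|=\gamma(G)$, i.e.\ a $\gamma$-set. However, $G[D']$ has strictly fewer edges than $G[D]$: all edges incident to $v$ in $G[D]$ disappear (and there is at least one such edge), while $u$ contributes no new edges to $G[D']$ because $u$ has no neighbor in $D-\{v\}$. This contradicts the choice of~$D$, establishing the claim and hence~(1).

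Part~(2) then follows: the $\gamma$-set $D$ from (1) is, by the definition of certified domination, a certified dominating set, so $\gamma_{\cer}(G)\le |D|=\gamma(G)$, and the reverse inequality $\gamma(G)\le \gamma_{\cer}(G)$ is immediate. (Equivalently, since $\delta(G)\ge 2$ forces every component of $G$ to have order at least $3$, one may apply Theorem~\ref{obs_gamma_rowne} to each component.) The only delicate point is the extremal swap: one must verify both that $D'$ remains dominating and that the edge count strictly drops, which is where the hypothesis $\delta(G)\ge 2$ (guaranteeing $v$ has a neighbor in $D$ in the bad case) is essential.
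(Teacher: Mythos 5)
Your argument is essentially the paper's: both proofs turn on the same swap $D'=(D-\{v\})\cup\{u\}$, where $u$ is the unique neighbor of $v$ outside $D$ and is identified (via minimality of $D$) as the unique private neighbor of $v$. The only real difference is the bookkeeping device: the paper iteratively drives the potential $|q(D)|=|\{x\in D\colon |N_G(x)\cap(V_G-D)|\le 1\}|$ down to zero, while you take a $\gamma$-set minimizing $|E(G[D])|$ and show the swap would strictly decrease that count. Your verification that $D'$ remains dominating and that the induced edge count strictly drops is correct.

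There is, however, one small gap. Your extremal claim is that every $v\in D$ has \emph{zero or} at least two neighbors in $V_G-D$, whereas statement (1) asserts at least two for every $v\in D$; the zero case must therefore be excluded as well (it matters for (1) as stated, even though (2) would follow from the weaker dichotomy). The exclusion is easy: if $N_G(v)\subseteq D$ for some $v\in D$, then since $d_G(v)\ge 2\ge 1$ the vertex $v$ is dominated by a neighbor in $D-\{v\}$, and every vertex of $V_G-D$ retains a dominator in $D-\{v\}$ because its dominator cannot be $v$, whose entire neighborhood lies inside $D$; hence $D-\{v\}$ would be a smaller dominating set, a contradiction. The paper disposes of this case at the outset by noting that $N_G(v)$ cannot be a subset of $D$; add that one line and your proof is complete.
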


\noindent \begin{proof} If $X$ is a $\gamma$-set of $G$, then by $q(X)$ we denote the set $\{x\in X\colon |N_G(x) \cap (V_G - X)| \le 1\}$. It remains to show that $q(X)= \emptyset$ for some $\gamma$-set $X$ of $G$. Let $D$ be a $\gamma$-set of $G$.  If $q(D)=\emptyset$, then $D$ is the required set. Thus assume that $q(D) \neq \emptyset$. But now it suffices to show that there exists another $\gamma$-set $D'$ of $G$ for which $|q(D')|<|q(D)|$.

Let $v$ be any vertex in $q(D)$. Since $d_G(v)\ge \delta(G)\ge 2$, the set $N_G(v)$ cannot be a subset of $D$, as otherwise $D-\{v\}$ would be a dominating set of $G$. Consequently $|N_G(v)\cap (V_G-D)|=1$, say $N_G(v)\cap (V_G-D)=\{v'\}$.
Again, since $d_G(v)\ge 2$ and $|N_G(v)\cap (V_G-D)|=1$, $N_G(v)\cap D\not=
\emptyset$ and $v'$ is the only private neighbor of $v$ with respect to $D$.
Thus $N_G(v')-\{v\}$ is a non-empty subset of $V_G-D$ and $D'=(D-\{v\})\cup \{v'\}$ is a minimum dominating set of $G$ and the set $q(D')=\{x \in D'\colon |N_G(x)\cap (V_G-D')|
\le 1\}$ is smaller than $q(D)$ (as $q(D')=q(D)-\{v\}$). Therefore $|q(D')| < |q(D)|$
and this completes the proof of (1). The property (2) follows from (1) and Theorem   \ref{obs_gamma_rowne}. \end{proof}

It has been proved in \cite{GuntherHartnellMarkusRall} that if $D$ is a unique $\gamma$-set of a graph $G$, then every vertex in $D$ that is not an isolated vertex has at least two private neighbors other than itself, that is, in the set $V_G-D$.
From this and from Theorem \ref{obs_gamma_rowne} we have the following corollary.

\begin{wn}[\cite{nasz}] \label{gamma=gamma-unique} If a graph $G$ has a unique $\gamma$-set, then $\gamma(G)=\gamma_{\cer}(G)$. \end{wn}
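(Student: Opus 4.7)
The plan is to show directly that the unique $\gamma$-set $D$ of $G$ is itself a certified dominating set; once that is established, the chain $\gamma(G)\le \gamma_{\cer}(G)\le |D|=\gamma(G)$ forces equality. So the whole argument reduces to verifying the certified condition at every vertex of $D$.

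First I would fix the unique $\gamma$-set $D$ and split its vertices into isolated vertices of $G$ and non-isolated ones. For an isolated vertex $v\in D$, we trivially have $|N_G(v)\cap (V_G-D)|=0$, which satisfies the certified condition vacuously. For a non-isolated vertex $v\in D$, I would invoke the result of \cite{GuntherHartnellMarkusRall} cited immediately before the statement: since $D$ is the unique $\gamma$-set, such a $v$ must have at least two private neighbors with respect to $D$ outside itself, i.e.\ $|P\!N_G(v,D)|\ge 2$. In particular $|N_G(v)\cap (V_G-D)|\ge |P\!N_G(v,D)|\ge 2$.

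Thus every vertex of $D$ has either zero or at least two neighbors in $V_G-D$, so $D$ is a certified dominating set of $G$. This yields $\gamma_{\cer}(G)\le |D|=\gamma(G)$, and combined with the trivial bound $\gamma(G)\le \gamma_{\cer}(G)$ noted in the paper, we obtain $\gamma(G)=\gamma_{\cer}(G)$.

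There is essentially no obstacle here: all the work is done by the cited lemma of Gunther, Hartnell, Markus and Rall, and the only mild point of care is to separate the isolated-vertex case (so that we do not need connectivity or the order restriction of Theorem~\ref{obs_gamma_rowne}, which would otherwise be unavailable in the general setting of this corollary).
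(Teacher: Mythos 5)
Your proof is correct and follows essentially the same route as the paper: both arguments rest entirely on the cited result of Gunther, Hartnell, Markus and Rall that in a unique $\gamma$-set every non-isolated vertex has at least two private neighbors in $V_G-D$. The only (welcome) difference is that you verify the certified condition directly instead of appealing to Theorem~\ref{obs_gamma_rowne}, which neatly sidesteps that theorem's connectivity and order hypotheses in the general setting of the corollary.
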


The main properties of graphs having unique $\gamma$-sets have been studied in \cite{GuntherHartnellMarkusRall} and partialy in \cite{Fischermann}. It was also observed in \cite{GuntherHartnellMarkusRall} that if $D$ is a $\gamma$-set of a graph $G$ and $\gamma(G-x)>\gamma(G)$ for every $x\in D$, then $D$ is the unique $\gamma$-set of $G$. Thus, by Corollary \ref{gamma=gamma-unique}, we have the next corollary.

\begin{wn} \label{warunek-jedynosci} If a graph $G$ has a $\gamma$-set $D$ such that $\gamma(G-x)>\gamma(G)$ for every $x\in D$, then $\gamma(G)=\gamma_{\cer}(G)$. \end{wn}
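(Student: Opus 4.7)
The plan is to derive the corollary as a direct consequence of the two facts recalled just before its statement, so that essentially no new argument is required. First, I would invoke the result of \cite{GuntherHartnellMarkusRall} restated in the paragraph preceding the corollary: whenever a graph $G$ admits a $\gamma$-set $D$ with $\gamma(G-x)>\gamma(G)$ for every $x\in D$, that set $D$ must in fact be the unique $\gamma$-set of $G$. This reduces the hypothesis of the present corollary to the hypothesis of Corollary~\ref{gamma=gamma-unique}.

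The second step is then immediate. Corollary~\ref{gamma=gamma-unique} asserts that any graph possessing a unique $\gamma$-set satisfies $\gamma(G)=\gamma_{\cer}(G)$. Chaining the two implications gives the desired equality at once, so the proof consists of little more than citing these two ingredients.

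Since both facts have already been established, there is no substantive obstacle. If one preferred a fully self-contained presentation, only the uniqueness step needs justification, and it admits a short deletion argument by contradiction: if $D'\neq D$ were another $\gamma$-set of $G$, pick any $x\in D\setminus D'$; then $D'\subseteq V_G-\{x\}$ and $D'$ still dominates $G-x$, whence $\gamma(G-x)\le |D'|=\gamma(G)$, contradicting the assumption $\gamma(G-x)>\gamma(G)$. That one-line verification is the only place where a small amount of work is hidden, and it is a routine observation rather than a real difficulty.
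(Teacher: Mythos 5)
Your proposal is correct and follows exactly the paper's route: the cited observation from Gunther, Hartnell, Markus, and Rall reduces the hypothesis to uniqueness of the $\gamma$-set, after which Corollary~\ref{gamma=gamma-unique} finishes the argument. Your optional self-contained deletion argument for the uniqueness step is also sound.
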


The next theorem provides another sufficient condition for the equality of domination and certified domination numbers of a graph.

\begin{thm} \label{gamma_rowne} Let $G$ be a connected graph of order at least  three. Then  $\gamma(G)=\gamma_{\cer}(G)$ if $\gamma(G-v)\ge \gamma(G)$ for every vertex $v$ belonging to any $\gamma$-set of $G$. \end{thm}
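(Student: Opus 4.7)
The plan is to reduce to Theorem~\ref{obs_gamma_rowne}: it suffices to find a $\gamma$-set $D^*$ of $G$ in which every vertex has at least two neighbors in $V_G-D^*$. Following the template of the proof of Corollary~\ref{theorem1-1}, I associate to a $\gamma$-set $X$ the ``bad set'' $q(X) = \{x \in X : |N_G(x) \cap (V_G - X)| \le 1\}$ and aim to show that whenever $q(D) \neq \emptyset$ for some $\gamma$-set $D$, there is another $\gamma$-set $D''$ with $|q(D'')| < |q(D)|$; iteration then yields the required $D^*$.

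So fix a $\gamma$-set $D$ and a vertex $v \in q(D)$. Connectedness of $G$ (of order at least three) and minimality of $D$ rule out $N_G(v) \subseteq D$, so there is a unique $v' \in N_G(v) \cap (V_G - D)$; and a routine check shows that $D' = (D - \{v\}) \cup \{v'\}$ is itself a $\gamma$-set of $G$. I now invoke the hypothesis twice. First, if $v$ were not a leaf, then it would have a neighbor in $D$, and $D - \{v\}$ would dominate $G - v'$, yielding $\gamma(G-v') \le \gamma(G) - 1$ --- contradicting the hypothesis applied to $v' \in D'$. Hence $v$ is a leaf with support $v'$. Second, if $v'$ had a neighbor in $D - \{v\}$, then $D - \{v\}$ would dominate $G - v$ (a leaf dominates only its support, which is now otherwise covered), giving $\gamma(G - v) \le \gamma(G) - 1$ --- contradicting the hypothesis applied to $v \in D$. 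Therefore $N_G(v') \cap D = \{v\}$.

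Taking $D'' := D'$, I then verify that $q(D'') = q(D) - \{v\}$. The vertex $v'$, whose only $D$-neighbor was $v$, now has all $d_G(v') \ge 2$ of its neighbors in $V_G - D''$, so $v' \notin q(D'')$. For any $u \in D - \{v\} = D'' - \{v'\}$, neither $v$ (a leaf attached only to $v'$) nor $v'$ (with $N_G(v') \cap D = \{v\}$) is a neighbor of $u$, so $N_G(u) \cap (V_G - D'') = N_G(u) \cap (V_G - D)$, and $u$'s $q$-status is unchanged. The main obstacle is precisely the leaf subcase: the direct swap used in Corollary~\ref{theorem1-1} breaks down when $\delta(G) = 1$, and it is exactly the twofold use of the hypothesis --- once on $v'$ to force $v$ to be a leaf, and once on $v$ to pin down $N_G(v') \cap D = \{v\}$ --- that makes swapping $v$ with $v'$ strictly decrease $q$.
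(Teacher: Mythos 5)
Your proof is correct and takes essentially the same route as the paper's: the same potential function $q$, the same swap $D'=(D-\{v\})\cup\{v'\}$, and the same two applications of the hypothesis (to $v\in D$ and to $v'\in D'$), merely in the opposite order --- you first force $v$ to be a leaf with support $v'$ and then pin down $N_G(v')\cap D=\{v\}$, whereas the paper first shows $v'\in P\!N_G[v,D]$ and then rules out $N_G(v)-\{v'\}\neq\emptyset$. The endpoint ($q(D')\subseteq q(D)-\{v\}$, hence strict decrease and termination) is identical.
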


\begin{proof} Similarly as in the proof of Corollary \ref{theorem1-1}, if $X$ is a $\gamma$-set of $G$, then by $q(X)$ we denote the set $\{x\in X\colon |N_G(x) \cap (V_G - X)| \le 1\}$. Assume that $\gamma(G-x)\ge \gamma(G)$ for every vertex $x$ belonging to any $\gamma$-set of $G$. To prove that $\gamma(G) = \gamma_{\cer}(G)$, by Theorem \ref{obs_gamma_rowne}, it remains to show that $q(X)= \emptyset$ for some $\gamma$-set $X$ of $G$. Let $D$ be a $\gamma$-set of $G$.  If $q(D)=\emptyset$, then $D$ is the required set. Thus assume that $q(D) \neq \emptyset$. Now it suffices to show that there exists another $\gamma$-set $D'$ of $G$ for which $|q(D')|<|q(D)|$.

Let $v$ be any vertex belonging to $q(D)$. Since $v$ is not an isolated vertex, the minimality of $D$ implies that $N_G(v) \cap (V_G - D)\not=\emptyset$ (as otherwise $D-\{v\}$ would be a smaller dominating set of $G$). Thus $|N_G(v) \cap (V_G - D)|=1$, and let $v'$ be the only vertex in $N_G(v) \cap (V_G - D)$. Then the set $D'=(D-\{v\})\cup \{v'\}$ is a~$\gamma$-set of $G$.

We now claim that $v'$ is a private neighbor of $v$ with respect to $D$. Suppose, contrary to our claim, that $v'\not\in P\!N_G[v,D]$. Then $v'\in N_G(D-\{v\})$ and either $N_G(v)\cap D\not=\emptyset$ or $N_G(v)\cap D=\emptyset$. The first case is impossible (as otherwise $D-\{v\}$ would be a smaller dominating set of $G$). Thus $N_G(v)\cap D=\emptyset$, but now $D-\{v\}$ is a dominating set of $G-v$ and therefore, $\gamma(G-v)\le |D-\{v\}|<|D|=\gamma(G)$, contrary to our assumption. This proves that $v'$ is a~private neighbor of $v$ with respect to $D$.

Next, since the private neighbor $v'$ of $v$ with respect to $D$ is the only neighbor of $v$ in $V_G-D$ and since $G$ is a connected graph of order at least  three, the sets $N_G(v)-\{v'\}$ and $N_G(v')-\{v\}$ are subsets of $D$ and $V_G-D$, respectively, and at least one of them is non-empty. If $N_G(v)-\{v'\}\not=\emptyset$, then $P\!N_G[v',D']=\{v'\}$ and $D'-\{v'\}$ is a dominating set of $G-v'$, and then $\gamma(G-v')\le |D'-\{v'\}|<|D'|=\gamma(G)$, contrary to our assumption. Thus assume
that $N_G(v)-\{v'\}=\emptyset$. Then $N_G(v')-\{v\}$ is a~non-empty subset of $V_G-D'$, and therefore $|N_G(v') \cap (V_G - D')|= |\{v\}|+|(N_G(v')-\{v\}) \cap (V_G - D')|=
1+|N_G(v')-\{v\}|\ge 2$. This implies that $v'\not\in q(D')$. Now, since $v\not\in q(D')$ (as $v\not\in D'$), neither $v'$ nor $v$ belongs to $q(D')$, and therefore  $q(D') \subseteq q(D)-\{v\}$. Consequently, we have $|q(D')|<|q(D)|$ and this completes the proof. \end{proof}

The fundamental relations between the classes of graphs considered in Corollaries \ref{theorem1-1}--\ref{warunek-jedynosci} and Theorems  \ref{obs_gamma_rowne} and \ref{gamma_rowne} are illustrated by examples in Figure \ref{rys-do-pracy}.\\

\begin{figure}[h!] \begin{center} \bigskip
\centerline{\epsfxsize=6.25in \epsffile{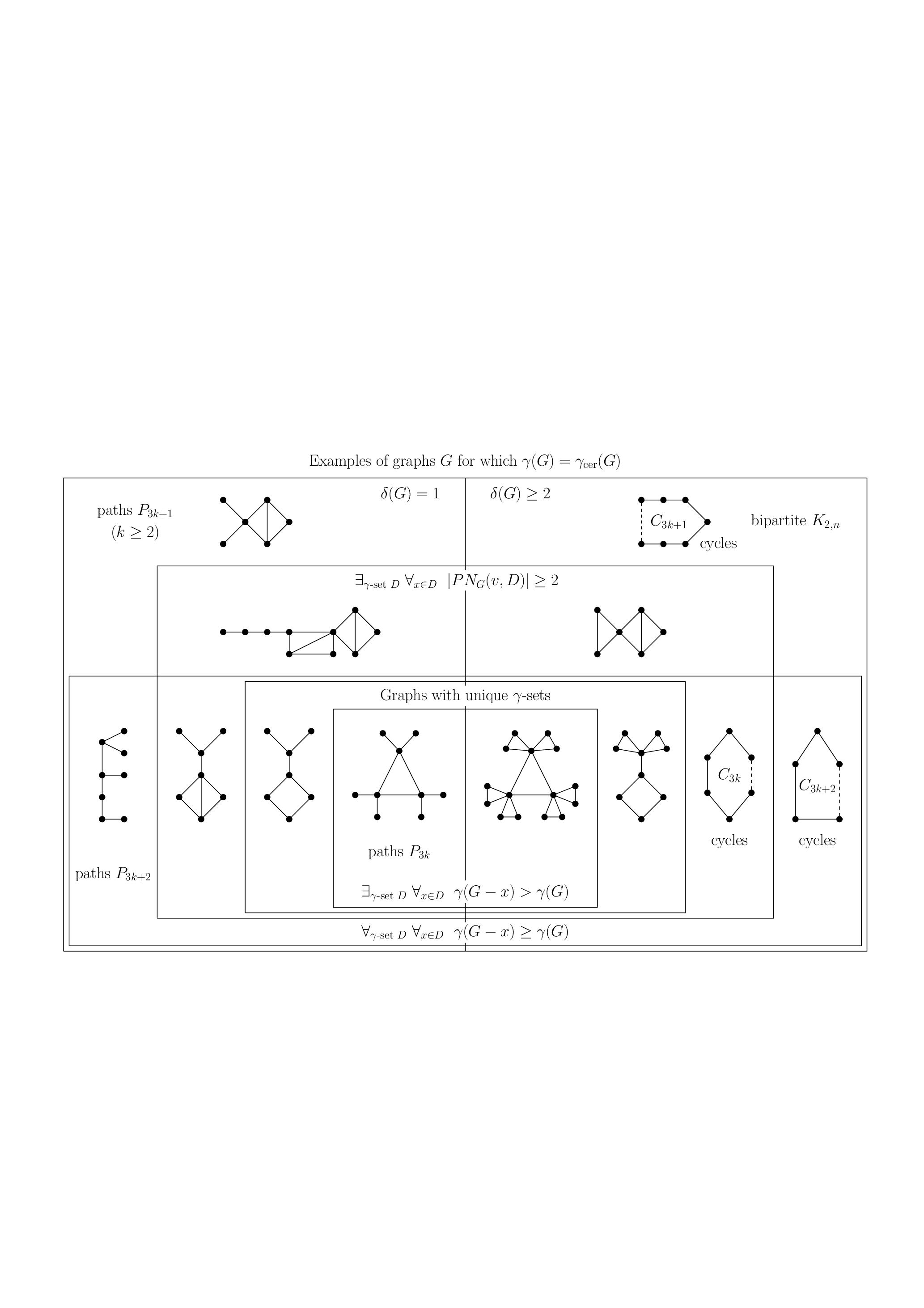}}
\caption{Examples of graphs $G$ for which $\gamma(G)=\gamma_\cer(G)$.} \label{rys-do-pracy}
\end{center}\end{figure}

A graph $G$ is said to be {\em $P_4$-free} if $P_4$ is not an induced subgraph of $G$.
We also say that a graph $G$ is $\gamma\gamma_{\cer}$-perfect if $\gamma(H) = \gamma_{\cer}(H)$ for each induced and connected subgraph $H\not=K_2$ of $G$. It follows from this definition that a graph $G$ is $\gamma\gamma_{\cer}$-perfect if and only if each non-2-element component of $G$ is $\gamma\gamma_{\cer}$-perfect. The path $P_4$ is the smallest non-$\gamma\gamma_{\cer}$-perfect graph. The union $ K_{2} \cup C_{4}$ is a~$\gamma\gamma_{\cer}$-perfect graph, while the union $ K_{2} \cup C_{5} $ is not a~$ \gamma\gamma_{\cer}$-perfect graph. We now study the equality $\gamma(G)=\gamma_{\cer}(G)$ for $P_4$-free graphs.

\begin{thm}\label{perfect} If $G$ is a connected $P_4$-free graph and $G \neq K_2$, then $\gamma(G) = \gamma_{cer}(G)$. \end{thm}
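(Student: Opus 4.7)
My plan is to invoke the standard structural fact for $P_4$-free graphs: a connected $P_4$-free graph on at least two vertices can be written as a join $G=G_1+G_2$ (every vertex of $G_1$ is adjacent to every vertex of $G_2$), equivalently $\overline{G}$ is disconnected. Combined with Theorem~\ref{obs_gamma_rowne}, this reduces the task to exhibiting a $\gamma$-set $D$ of $G$ such that every vertex of $D$ has at least two neighbors in $V_G-D$. The case $|V_G|=1$ is immediate, and $G\ne K_2$ together with connectedness forces $|V_G|\ge 3$ in all remaining cases, so Theorem~\ref{obs_gamma_rowne} applies.

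I would then split on whether $G$ has a universal vertex. If $v$ is universal, then $\gamma(G)=1$ and $v$ already has $|V_G|-1\ge 2$ external neighbors, so $\{v\}$ is the required $\gamma$-set.

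Suppose $G$ has no universal vertex, and decompose $G=G_1+G_2$. Two easy observations suffice: (i) $|V_{G_1}|,|V_{G_2}|\ge 2$, since a lone vertex in any $G_i$ would be universal in $G$; and (ii) if $|V_{G_i}|=2$, then $G_i=\overline{K_2}$, since the two vertices of a $K_2$-summand would each be universal in $G$ via the join. In particular $\gamma(G)=2$. If $|V_{G_1}|,|V_{G_2}|\ge 3$, I simply take any $u\in V_{G_1}$ and $v\in V_{G_2}$ and set $D=\{u,v\}$; the join structure guarantees at least $|V_{G_2}|-1\ge 2$ neighbors of $u$ in $V_G-D$, and symmetrically for $v$. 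Otherwise, by (ii) I may assume $G_1=\overline{K_2}=\{a_1,a_2\}$ and take $D=\{a_1,a_2\}$: this is a dominating set (both $a_i$ are joined to all of $V_{G_2}$), and the absence of an edge inside $G_1$ makes $N_G(a_i)\cap(V_G-D)=V_{G_2}$, which has size $\ge 2$. In every branch Theorem~\ref{obs_gamma_rowne} then yields $\gamma(G)=\gamma_\cer(G)$.

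The main subtlety I expect is the boundary case where a join summand has only two vertices: picking one vertex from each side of the join leaves a vertex with just one external neighbor. The remedy is to switch strategy and take both vertices of the $\overline{K_2}$ summand, which are non-adjacent in $G_i$ yet both fully joined to the other side, giving the needed pair of external neighbors for each.
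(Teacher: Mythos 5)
Your argument is correct, but it follows a genuinely different route from the paper. The paper works by local exchange: it takes a $\gamma$-set $D$ containing no leaf, measures its defect by $p(D)=\{x\in D\colon |N_G(x)\cap(V_G-D)|=1\}$, and shows that swapping a defective vertex $v$ for its unique external private neighbor $v'$ strictly decreases $|p(\cdot)|$; $P_4$-freeness is used only locally, to force $v'$ to be adjacent to all private neighbors of the $D$-neighbors of $v$ and to pin down $N_G(v)\cap D$. You instead invoke the global cograph decomposition (a connected $P_4$-free graph on at least two vertices is a join, since its complement is disconnected), which immediately yields $\gamma(G)\le 2$ and lets you exhibit the required $\gamma$-set explicitly; your case analysis, including the boundary case of a $\overline{K_2}$ join summand, is complete and the appeal to Theorem~\ref{obs_gamma_rowne} is legitimate in every branch. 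What your approach buys is brevity, an explicit description of an optimal certified dominating set, and the stronger structural byproduct $\gamma(G)\le 2$; what it costs is reliance on Seinsche's characterization of $P_4$-free graphs, an external result the paper does not need, whereas the paper's exchange argument is self-contained and is of a piece with the $q(\cdot)$/$p(\cdot)$ machinery it reuses in Corollary~\ref{theorem1-1} and Theorem~\ref{gamma_rowne}.
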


\begin{proof}
The result is obvious if $G=K_1$. Thus assume that $G$ is a connected $P_4$-free graph of order at least three. We shall prove that $\gamma(G) = \gamma_{\cer}(G)$. Since $\gamma(G) \leq \gamma_{\cer}(G)$, it suffices to show that some $\gamma$-set
of $G$ is a certified dominating set of $G$. If $X$ is a $\gamma$-set of $G$, then let
$p(X)$ denote the set $\{x\in X\colon |N_G(x) \cap (V_G - X)| = 1\}$. Now let $D$ be a $\gamma$-set of $G$. Since $G$ is connected and $G \neq K_2$, we may assume that $D$ contains no leaf of $G$. If $p(D)=\emptyset$, then $D$ is the required set. Thus assume that $p(D) \neq \emptyset$. But now what is left is to show that there exists another $\gamma$-set $D'$ of $G$ for which $|p(D')|<|p(D)|$.

Let $v$ be any vertex belonging to $p(D)$ and let $v'$ be the only neighbor of $v$ in $V_G-D$. Since $v$ is not a leaf and $v'$ is the only neighbor of $v$ in $V_G - D$, the set $N_G(v) \cap D$ is non-empty.  Choose any vertex $u \in N_G(v) \cap D$.
The minimality of $D$ and the fact that $v$ and $u$ are adjacent elements of $D$ imply  that the private neighborhoods $P\!N_G[v,D]$ and $P\!N_G[u,D]$ are disjoint and non-empty subsets of $V_G - D$. Certainly, $P\!N_G[v,D]= \{v'\}$ and, therefore, $N_G(x)\cap \{v'\}= \emptyset$ for every $x\in D-\{v\}$. Now, since $G$ is a $P_4$-free graph, the vertex $v'$ is not a leaf and it is adjacent to every vertex in $P\!N_G[u,D]$ and, therefore, to every vertex in $\bigcup_{u\in N_G(v)\cap D}P\!N_G[u,D]$. Again from the fact that $G$ is $P_4$-free it is easily seen that $N_G(v)\cap D=\{u\}$ and  $N_G(u)\cap D=\{v\}$. Now let us consider the set $D' = (D- \{v\}) \cup \{v'\}$. It is obvious that $D'$ is a~minimum dominating set of $G$ and $D'$ contains no leaf of $G$. It remains to show that $p(D')\subseteq  p(D)-\{v\}$. First, let us observe that $v \not\in p(D')$ (as $v\not\in D'$) and $\{v',u\}\cap p(D')= \emptyset$ (as $|N_G(x)\cap (V_G-D')|\ge |\{v\}
\cup P\!N_G[u,D]|\ge 2$ if $x\in \{v',u\}$). Now, since $N_G(v)\cap p(D')= \{v',u\}
\cap p(D')= \emptyset$, we have $N_G(x)\cap \{v\}= \emptyset$ for every $x\in p(D')$.
Consequently, if $x\in p(D')$, then $x\in p(D')-\{v, v', u\}$ and therefore we have
$1= |N_G(x)\cap (V_G-D')|= |N_G(x)\cap (V_G-((D- \{v\}) \cup \{v'\}))|=
|N_G(x)\cap (((V_G-D)- \{v'\}) \cup \{v\})|= |N_G(x)\cap ((V_G-D)- \{v'\}) \cup(N_G(x) \cap  \{v\})|= |((N_G(x)\cap (V_G-D))-(N_G(x)\cap \{v'\})) \cup(N_G(x) \cap  \{v\})|=
|N_G(x)\cap (V_G-D)|$, as $N_G(x) \cap  \{v'\}= \emptyset$ and $N_G(x) \cap  \{v\}= \emptyset$. This shows that $p(D')\subseteq  p(D)-\{v\}$ and completes the proof. \end{proof}

Theorem \ref{perfect} immediately implies the following characterization of the $\gamma\gamma_{\cer}$-perfect graphs.

\begin{wn} A graph $G$ is a $\gamma\gamma_{\cer}$-perfect graph if and only if $G$ is a $P_4$-free graph. \end{wn}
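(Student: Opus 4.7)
The plan is to prove the two directions separately; both are essentially immediate consequences of the machinery already developed.

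For the forward implication, suppose $G$ is $P_4$-free. Let $H$ be an arbitrary connected induced subgraph of $G$ with $H\neq K_2$. Since the class of $P_4$-free graphs is hereditary (an induced subgraph of an induced subgraph of $G$ is still an induced subgraph of $G$), $H$ is itself a connected $P_4$-free graph distinct from $K_2$. Theorem~\ref{perfect} then gives $\gamma(H)=\gamma_{\cer}(H)$, and since $H$ was arbitrary, $G$ is $\gamma\gamma_{\cer}$-perfect.

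For the converse, I would argue by contrapositive: assume $G$ is \emph{not} $P_4$-free, so $G$ contains some induced copy of $P_4$. This induced $P_4$ is a connected induced subgraph of $G$ different from $K_2$, so to witness that $G$ is not $\gamma\gamma_{\cer}$-perfect it suffices to verify $\gamma(P_4)\neq \gamma_{\cer}(P_4)$. Writing $P_4=v_1v_2v_3v_4$, the set $\{v_2,v_3\}$ dominates $P_4$, so $\gamma(P_4)=2$. A short case analysis on two-element subsets $D\subseteq V_{P_4}$ shows that each of $\{v_1,v_4\}$, $\{v_1,v_3\}$, $\{v_2,v_4\}$, $\{v_2,v_3\}$ contains a vertex with exactly one neighbor outside $D$ (so is not certified), and the remaining two-element sets do not dominate; a similar inspection of three-element sets shows that each has a vertex with exactly one neighbor outside $D$. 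Hence $\gamma_{\cer}(P_4)\geq 4>\gamma(P_4)$, so $P_4$ itself is not $\gamma\gamma_{\cer}$-perfect and neither is~$G$.

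There is no real obstacle: the forward direction is pure heredity plus Theorem~\ref{perfect}, and the converse reduces to the single small computation on $P_4$ (the paper in fact already advertises $P_4$ as the smallest non-$\gamma\gamma_{\cer}$-perfect graph). The only step requiring any attention is ruling out certified dominating sets of $P_4$ of sizes $2$ and $3$, which is a finite check.
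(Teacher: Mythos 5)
Your proposal is correct and follows essentially the same route as the paper: the ``if'' direction is heredity of $P_4$-freeness combined with Theorem~\ref{perfect}, and the ``only if'' direction rests on the computation $\gamma(P_4)=2<4=\gamma_{\cer}(P_4)$, which the paper simply asserts and you verify by a finite check. No gaps.
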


\begin{proof}     The ``only if'' part of the theorem follows from the fact that $ \gamma(P_{4}) = 2 < 4 = \gamma_{\cer}(P_4) $. The ``if'' part follows from Theorem \ref{perfect}. \end{proof}

The {\it corona} $H\circ K_1$ of a graph $H$ was defined in \cite{FruchtHarary} as the graph obtained from $H$ by adding a pendant edge to each vertex of $H$. A graph $G$ is said to be a {\em corona} if $G$ is the corona $H\circ K_1$ of some graph $H$. It is obvious that a corona is a graph in which each vertex is a leaf or it is adjacent to exactly one leaf. We now consider a generalization of this operation and study properties of certified dominating sets and certified domination numbers of resulting graphs.
In order that this presentation to be self-contained, we include the following definition. For a graph $G$ and a family ${\cal P}=\{\mathcal{P}(v)\colon v\in V_G\}$, where $\mathcal{P}(v)$  is a~partition of the neighborhood $N_G(v)$ of $v\in V_G$, the {\em $\mathcal{P}$-corona} of $G$, denoted by $G\circ \mathcal{P}$, is the graph defined in \cite{Dettlaff...Zylinski} and such that $$V_{G\circ \mathcal{P}}=\{(v,1)\colon v\in V_G\}\cup \bigcup_{v\in V_G} \{(v, A)\colon A\in \mathcal{P}(v)\}$$ and
$$E_{G\circ \mathcal{P}}= \bigcup_{v\in V_G} \{(v,1)(v,A)\colon A\in \mathcal{P}(v)\}\cup \bigcup_{uv\in E_G}\{(v,A)(u,B)\colon  (u\in A)  \wedge  (v\in B) \}.$$

\begin{example}{\rm
For the graph $G$ in Figure \ref{GGGGGoP} and a family ${\cal P}= \{\mathcal{P}(v) \colon v\in V_G\}$ of partitions of vertex neighborhoods of $G$, where ${\cal P}(v)= \{\{z\}, \{u\}\}$, ${\cal P}(u)=\{\{v,z\},\{w\}\}$, ${\cal P}(w)=\{\{u\}\}$, and ${\cal P}(z)=\{\{v,u\}\}$, the ${\cal P}$-corona of $G$ is the graph ${\cal P}\circ G$ shown on the right in  Figure \ref{GGGGGoP}.} \end{example}

\begin{figure}[h!] \begin{center} {\epsfxsize=4.5in \epsffile{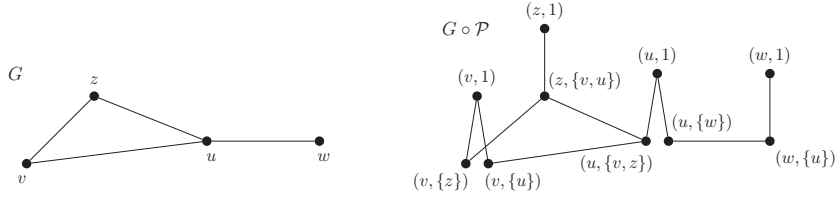}}%
\caption{Graphs $G$ and its ${\cal P}$-corona $G\circ {\cal P}$. } \label{GGGGGoP}\end{center}\end{figure}

In the next theorem we provide a necessary and sufficient condition for a $\mathcal{P}$-corona of $G$ to have $\gamma(G\circ {\cal P})= \gamma_{\rm cer}(G \circ {\cal P})$.  We start with the following two lemmas.

\begin{lem}\label{lem:gamma_eq_gammacer_no_leaves} Let $G$ be a connected graph of order at least three. If $D$ is a $\gamma_\cer$-set of $G$ and $D\cap L_G\not=\emptyset$, then $D$ is not a $\gamma$-set of $G$. Equivalently, if  $D$ is a $\gamma_\cer$-set of $G$ and $\gamma(G)=\gamma_{\cer}(G)$, then $L_G\cap D=\emptyset$ and $S_G\subseteq D$. \end{lem}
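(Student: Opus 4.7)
The plan is to prove the first implication directly, then deduce the contrapositive form stated after the word "Equivalently." The crux is that a leaf in a certified dominating set $D$ is "useless" and can be dropped, producing a strictly smaller dominating set.

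First I would suppose $D$ is a $\gamma_{\cer}$-set of $G$ with some leaf $\ell \in D \cap L_G$. Let $s$ be the unique neighbor of $\ell$ in $G$. Since $\ell$ is a leaf, $|N_G(\ell)\cap (V_G-D)| \le 1$, so the certified condition forces $|N_G(\ell)\cap (V_G-D)| = 0$, which means $s\in D$. Now consider $D' = D \setminus \{\ell\}$. Every vertex outside $D$ is still dominated by $D'$ (removing $\ell$ from $D$ only risks undominating $\ell$ itself, but $\ell$ is dominated by $s\in D'$). Thus $D'$ is a dominating set of $G$ with $|D'| = |D| - 1$, proving $|D| > \gamma(G)$, so $D$ is not a $\gamma$-set. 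This yields the first implication.

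For the ``equivalently'' part I would argue as follows. Assume $D$ is a $\gamma_{\cer}$-set and $\gamma(G) = \gamma_{\cer}(G)$. Since every certified dominating set is a dominating set and $|D| = \gamma_{\cer}(G) = \gamma(G)$, the set $D$ is a $\gamma$-set. The first implication (contrapositive) then gives $L_G \cap D = \emptyset$. To show $S_G \subseteq D$, take any $s \in S_G$ and let $\ell$ be one of its leaf neighbors. Since $D$ is a dominating set and $\ell \notin D$ (as $L_G \cap D = \emptyset$), some vertex of $N_G(\ell) = \{s\}$ must lie in $D$, i.e.\ $s \in D$, as desired.

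I do not expect any real obstacle here: the statement is essentially a one-line consequence of the certified condition applied to a vertex of degree $1$, together with the fact that a leaf never needs to be in a minimum dominating set provided its support can be. The only care needed is the order of logical steps in the ``equivalently'' direction, namely first upgrading $D$ to a $\gamma$-set via the equality $\gamma(G) = \gamma_{\cer}(G)$ before invoking the first implication.
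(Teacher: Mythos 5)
Your proof is correct and follows essentially the same approach as the paper: the certified condition forces a leaf in $D$ to have its unique neighbor in $D$, so the leaf can be deleted to obtain a strictly smaller dominating set. You additionally spell out the easy $S_G\subseteq D$ consequence, which the paper leaves implicit.
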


\begin{proof} Assume that $D$ is a $\gamma_\cer$-set of $G$ and $v\in D\cap L_G$. Then,  since $D$ is a certified dominating set of $G$, the only neighbor of $v$ is in~$D$. Therefore $D - \{v\}$ is a dominating set of $G$ and $\gamma(G)\le |D - \{v\}|<|D|$.
\end{proof}

\begin{lem}\label{lem:Pcoronas} If $G$ is a graph and ${\cal P}= \{\mathcal{P}(v)\colon v\in V_G\}$ is a family of partitions of vertex neighborhoods of $G$, then $\gamma(G\circ {\cal P})= |V_G|$. \end{lem}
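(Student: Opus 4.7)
\medskip\noindent\textbf{Proof plan for Lemma \ref{lem:Pcoronas}.} The plan is to prove the two inequalities $\gamma(G\circ\mathcal{P})\le |V_G|$ and $\gamma(G\circ\mathcal{P})\ge |V_G|$ separately. Neither step looks difficult; the only thing that has to be kept straight is the structure of the vertex set of $G\circ\mathcal{P}$.

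For the upper bound, the natural candidate is the set of ``central'' vertices
\[
D=\{(v,1)\colon v\in V_G\}.
\]
By the very definition of $E_{G\circ\mathcal{P}}$, every vertex $(v,A)$ with $A\in\mathcal{P}(v)$ is adjacent to $(v,1)\in D$, so each non-central vertex is dominated by $D$, while the central vertices $(v,1)$ dominate themselves. Hence $D$ is a dominating set of $G\circ\mathcal{P}$ and $\gamma(G\circ\mathcal{P})\le |D|=|V_G|$.

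For the lower bound, the key observation is that for each $v\in V_G$ the closed neighborhood of the central vertex $(v,1)$ in $G\circ\mathcal{P}$ is exactly
\[
N_{G\circ\mathcal{P}}[(v,1)]=\{(v,1)\}\cup\{(v,A)\colon A\in\mathcal{P}(v)\},
\]
because $(v,1)(v,A)$ are the only edges of $G\circ\mathcal{P}$ incident with $(v,1)$. Call this set the \emph{$v$-column}. The $v$-columns corresponding to distinct vertices of $G$ are pairwise disjoint (they have different first coordinates) and together they partition $V_{G\circ\mathcal{P}}$. Since any dominating set $D'$ of $G\circ\mathcal{P}$ must intersect the closed neighborhood of every vertex, in particular it must intersect the $v$-column for each $v\in V_G$, and hence $|D'|\ge |V_G|$. (This argument is unaffected by isolated vertices of $G$: if $v$ is isolated then $\mathcal{P}(v)=\emptyset$ and the $v$-column degenerates to $\{(v,1)\}$, but then $(v,1)$ must still belong to $D'$.)

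The only place where one has to be even a little careful is verifying that the central vertices really have no other neighbors, i.e.\ that the edges of the form $(v,A)(u,B)$ contributed by $E_G$ never touch any $(w,1)$; this is immediate from the description of $E_{G\circ\mathcal{P}}$. Combining the two bounds yields $\gamma(G\circ\mathcal{P})=|V_G|$, which is what we wanted.
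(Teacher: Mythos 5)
Your proof is correct and follows essentially the same route as the paper: the upper bound via the dominating set $V_G\times\{1\}$, and the lower bound by observing that the closed neighborhoods $N_{G\circ\mathcal{P}}[(v,1)]$ partition $V_{G\circ\mathcal{P}}$, so any dominating set must meet each of the $|V_G|$ parts. The extra remarks about isolated vertices and about central vertices having no other neighbors are harmless elaborations of what the paper leaves implicit.
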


\begin{proof} It is obvious from the definition of $G\circ \mathcal{P}$ that
$V_G\times \{1\}$ is a dominating set of $G\circ \mathcal{P}$, and therefore
$\gamma(G\circ \mathcal{P})\le |V_G\times \{1\}|=|V_G|$. On the other hand, let $D$ be a $\gamma$-set of $G\circ \mathcal{P}$. Then $D \cap N_{G\circ \mathcal{P}}[(v,1)]\not=
\emptyset$ for every $v\in V_G$, and, since the sets $N_{G\circ \mathcal{P}}[(v,1)]$ form a partition of $V_{G\circ \mathcal{P}}$, we have $\gamma(G\circ \mathcal{P})= |D|=
|\bigcup_{v\in V_G} \left(D\cap N_{G\circ \mathcal{P}}[(v,1)]\right)|= \sum_{v\in V_G}|D \cap N_{G\circ \mathcal{P}}[(v,1)]| \ge |V_G|$. Consequently,
$\gamma(G\circ {\cal P})= |V_G|$. \end{proof}

\begin{thm} \label{thm:Pcoronas}
If $G$ is a graph and ${\cal P}= \{\mathcal{P}(v)\colon v\in V_G\}$ is a family of partitions of vertex neighborhoods of $G$, then $\gamma(G\circ {\cal P})= \gamma_{\rm cer}(G \circ {\cal P})$ if and only if the set $\{u\in V_G \colon |{\cal P}(u)|\ge 2\}$ is a dominating set of $G$. \end{thm}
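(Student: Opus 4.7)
The plan is to leverage Lemma \ref{lem:Pcoronas}: since $\gamma(G\circ\mathcal{P})=|V_G|$ and the closed neighborhoods $N_{G\circ\mathcal{P}}[(v,1)]$ partition $V_{G\circ\mathcal{P}}$, every $\gamma$-set $D$ of $G\circ\mathcal{P}$ must contain exactly one vertex from each such ``star''. Thus $\gamma$-sets correspond bijectively to choice functions $\phi$ assigning to every $v\in V_G$ either the value $1$ (meaning $(v,1)\in D$) or some block $A\in\mathcal{P}(v)$ (meaning $(v,A)\in D$). The proof then reduces to translating the certified condition into constraints on $\phi$.

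The certified condition unpacks into two cases. For a vertex $(v,1)\in D$, its neighbors in $G\circ\mathcal{P}$ are exactly $\{(v,A):A\in\mathcal{P}(v)\}$, all of which lie in $V-D$; hence the certified condition at $(v,1)$ is just $|\mathcal{P}(v)|\ne 1$. For a vertex $(v,A_v)\in D$ with $\phi(v)=A_v$, the neighbors are $(v,1)\in V-D$ together with, for every $u\in A_v$, the vertex $(u,B_u)$ where $B_u\in\mathcal{P}(u)$ is the unique block containing $v$; moreover, $(u,B_u)\in D$ iff $\phi(u)=B_u$. Since $(v,1)$ alone already contributes one neighbor in $V-D$, the certified condition at $(v,A_v)$ is equivalent to the existence of some $u\in A_v$ with $\phi(u)\ne B_u$.

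For the ``if'' direction, assume $W:=\{u\in V_G:|\mathcal{P}(u)|\ge 2\}$ dominates $G$. I would define $\phi(v)=1$ whenever $|\mathcal{P}(v)|\ne 1$, and $\phi(v)=A_v$ (the unique block, which then equals $N_G(v)$) whenever $|\mathcal{P}(v)|=1$. The first condition is built in, and for the second I use that each $v$ with $|\mathcal{P}(v)|=1$ is non-isolated and lies outside $W$, so the hypothesis yields some $u\in N_G(v)\cap W$; for such $u$ we have $\phi(u)=1\ne B_u$, as required. For the ``only if'' direction, suppose $W$ is not dominating and pick a non-isolated $v\notin N_G[W]$; then $|\mathcal{P}(v)|=1$ and $|\mathcal{P}(u)|=1$ for every $u\in N_G(v)$. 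For any $\gamma$-set $D$ with choice function $\phi$, the case $\phi(v)=1$ immediately violates the first condition, while $\phi(v)=A_v$ forces some $u\in A_v$ with $\phi(u)\ne B_u$; but each such $u$ has only options $\phi(u)\in\{1,B_u\}$, so necessarily $\phi(u)=1$, which in turn violates the first condition at $u$.

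The delicate point will be the neighbor-bookkeeping in the second paragraph: carefully distinguishing the block $A_v\in\mathcal{P}(v)$ that labels $(v,A_v)$ from the blocks $B_u\in\mathcal{P}(u)$ encountered across each incident star, and being precise about which $(u,B_u)$ are selected by $\phi$. I would add a brief parenthetical handling of isolated vertices of $G$: an isolated $v$ makes $(v,1)$ isolated in $G\circ\mathcal{P}$ and so its certified condition is vacuous, which is consistent with the natural reading of the theorem under the implicit assumption that $G$ has no isolated vertices.
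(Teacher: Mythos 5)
Your proposal is correct and follows essentially the same route as the paper: your ``if'' direction constructs exactly the paper's set $\widetilde{D}$ (take $(v,1)$ when $|\mathcal{P}(v)|\ge 2$ and $(v,N_G(v))$ when $|\mathcal{P}(v)|=1$) and verifies the same two certified conditions, while your ``only if'' direction reaches the same contradiction at the vertex $(v,N_G(v))$ for a vertex $v$ undominated by $\{u\colon |\mathcal{P}(u)|\ge 2\}$, only packaged as a direct case analysis instead of via Lemma~\ref{lem:gamma_eq_gammacer_no_leaves}. The one imprecision is the word ``bijectively'': $\gamma$-sets inject into choice functions, but an arbitrary choice function need not yield a dominating set (a vertex $(v,B)$ with $B\ne\phi(v)\ne 1$ can be left undominated), so you should add the one-line check that your particular $\phi$ does dominate; nothing else breaks, and your explicit flagging of the isolated-vertex edge case matches an assumption the paper also leaves implicit.
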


\begin{proof}
Assume first that ${\cal P}= \{\mathcal{P}(v)\colon v\in V_G\}$ is a family such
$D=\{u\in V_G \colon |{\cal P}(u)|\ge 2\}$ is a dominating set of $G$.
Consider the sets $\overline{D}=V_G-D=\{x\in V_G\colon |{\cal P}(x)|=1\}$, $D'=\{(x,N_G(x))\colon x\in \overline{D}\}$, $D''= D\times \{1\}$, and $\widetilde{D}=D'\cup D''$. Then $|\widetilde{D}|=|D'|+ |D''|=|V_G|$, and, since $|V_G|= \gamma(G\circ {\cal P})$ (by Lemma \ref{lem:Pcoronas}), $|\widetilde{D}|=\gamma(G\circ {\cal P})$. Moreover, since the sets $N_{G\circ {\cal P}}[(v,1)]$ form a~partition of $V_{G\circ {\cal P}}$, the set $D'$ dominates $\bigcup_{x\in \overline{D}}N_{G\circ {\cal P}}[(x,1)]$, while $D''$ dominates $\bigcup_{y\in D}N_{G\circ {\cal P}}[(y,1)]$, the set $\widetilde{D}$ is a minimum dominating set of $G\circ {\cal P}$. Therefore, taking into account Theorem~\ref{obs_gamma_rowne}, all we need is to prove that each vertex in $\widetilde{D}$ has at least two neighbors in $V_{G\circ {\cal P}}-\widetilde{D}$. This is obvious for vertices in $D''$, for if $(v,1)\in D''=D\times \{1\}$, then $N_{G\circ {\cal P}}((v,1))= \{(v,A)\colon A\in {\cal P}(v)\} \subseteq V_{G\circ {\cal P}}-\widetilde{D}$ and $|N_{G\circ {\cal P}}((v,1))|= |{\cal P}(v)|\ge 2$ (as $v\in D$). As regards a vertex $(v,N_G(v)) \in D'$, then $v\in \overline{D}$, and so $(v,1)$ is a neighbor of $(v,N_G(v))$ that is not an element of $\widetilde{D}$. Since $D$ is a dominating set of $G$ and $v\in \overline{D}$, the vertex $v$ is adjacent to some vertex $u$ in $D$. Let $A$ be the only set in ${\cal P}(u)$ that contains $v$. Then $(u,A)$ is another neighbor of $(v,N_G(v))$ that is not in $\widetilde{D}$. Consequently, each vertex in $\widetilde{D}$ has at least two neighbors in $V_{G\circ {\cal P}}-\widetilde{D}$, and hence $\gamma(G\circ {\cal P})= \gamma_{\rm cer}(G \circ {\cal P})$.

On the other hand, assume that $\gamma(G\circ {\cal P})= \gamma_{\rm cer}(G \circ {\cal P})$. Then, by Theorem \ref{obs_gamma_rowne}, $G\circ {\cal P}$ has a $\gamma$-set $\widetilde{D}$ such that $|N_{G\circ {\cal P}}(x)\cap (V_{G\circ {\cal P}}-\widetilde{D})|\ge 2$ for every $x\in \widetilde{D}$. We shall prove that $\{x\in V_G \colon |{\cal P}(x)| \ge 2\}$ is a dominating set of $G$. Let $v$ be a vertex of $G$ such that $|{\cal P}(v)|=1$. It suffices to show $|{\cal P}(u)|\ge 2$ for some vertex $u\in N_G(v)$. Suppose on the contrary that $|{\cal P}(u)|=1$ for every $u\in N_G(v)$.
Then $|{\cal P}(x)|=1$ and therefore ${\cal P}(x)=\{N_G(x)\}$ for every $x\in N_G[v]$.
Thus every vertex in $\{(x,1)\colon x\in N_G[v]\}$ is a leaf of $G\circ {\cal P}$ and $\{(x,1)\colon x\in N_G[v]\}\cap \widetilde{D}=\emptyset$ (by Lemma~\ref{lem:gamma_eq_gammacer_no_leaves}).
Hence the set of their only neighbors $\{(x,N_G(x))\colon x\in N_G[v]\}$ is a subset
of the dominating set $\widetilde{D}$ of $G\circ {\cal P}$. But now, since $N_{G\circ {\cal P}}((v,N_G(v)))- \{(v,1)\}= \{(x,N_G(x))\colon x\in N_G(v)\}\subseteq
\widetilde{D}$, we have $2\le |N_{G\circ {\cal P}}((v,N_G(v)))\cap (V_{G\circ {\cal P}}-\widetilde{D})|\le |\{(v,1)\}|=1$, which is impossible. This proves $\{x\in V_G\colon |{\cal P}(x)|\ge 2\}$ is a dominating set of $G$. \end{proof}

If $G$ is a graph and ${\cal P}=\{{\cal P}(v)\colon v\in V_G\}$ is a family of partitions of vertex neighborhoods of $G$ and ${\cal P}(v)=\{N_G(v)\}$ for every $v\in V_G$, then the resulting ${\cal P}$-corona $G\circ {\cal P}$ of $G$ is isomorphic to the corona  $G\circ K_1$ of $G$ and, in this case, $G\circ {\cal P}$ is said to be the corona of $G$. Since the family ${\cal P}=\{\{N_G(v)\}\colon v\in V_G\}$ does not satisfy the assumption of Theorem~\ref{thm:Pcoronas} and $G\circ \mathcal{P}=G\circ K_1$, we therefore have the following corollary.

\begin{wn} If $G$ is a graph, then  $\gamma(G\circ K_1)< \gamma_{\rm cer}(G\circ K_1)$.
\end{wn}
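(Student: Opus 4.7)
The plan is to apply Theorem~\ref{thm:Pcoronas} directly, exploiting the identification noted in the paragraph immediately preceding the corollary: the corona $G\circ K_1$ coincides with the $\mathcal{P}$-corona $G\circ \mathcal{P}$ obtained from the family $\mathcal{P}=\{\{N_G(v)\}\colon v\in V_G\}$, in which each partition is the trivial one-block partition of $N_G(v)$.

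First I would record that for this particular $\mathcal{P}$ we have $|\mathcal{P}(v)|=1$ for every $v\in V_G$, so the distinguished set
\[
\{u\in V_G\colon |\mathcal{P}(u)|\ge 2\}
\]
appearing in the hypothesis of Theorem~\ref{thm:Pcoronas} is empty. Since $G$ is assumed to be a (non-empty) graph, the empty set cannot be a dominating set of $G$. Hence the condition of Theorem~\ref{thm:Pcoronas} fails for this $\mathcal{P}$, and the biconditional of that theorem yields $\gamma(G\circ K_1)\ne \gamma_{\rm cer}(G\circ K_1)$.

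Finally, combining this inequality with the obvious general bound $\gamma(G\circ K_1)\le \gamma_{\rm cer}(G\circ K_1)$ forces the strict inequality $\gamma(G\circ K_1)<\gamma_{\rm cer}(G\circ K_1)$, as required. There is essentially no obstacle here: all the substantive work has been carried out in Theorem~\ref{thm:Pcoronas}, and the only thing to check is the immediate observation that every block of the identity partition is the whole neighborhood, so no vertex of $G$ lies in $\{u\colon |\mathcal{P}(u)|\ge 2\}$.
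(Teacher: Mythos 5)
Your proposal is correct and follows exactly the paper's own argument: the paper derives this corollary from the observation that for $\mathcal{P}=\{\{N_G(v)\}\colon v\in V_G\}$ one has $G\circ\mathcal{P}=G\circ K_1$ while $\{u\in V_G\colon |\mathcal{P}(u)|\ge 2\}=\emptyset$ is not a dominating set, so Theorem~\ref{thm:Pcoronas} rules out equality. Combining with $\gamma\le\gamma_{\rm cer}$ gives the strict inequality, just as you wrote.
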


On the other hand, let $G$ be a graph and consider the family ${\cal P}= \{{\cal P}(v) \colon v \in V_G \}$ of partitions of vertex neighborhoods of $G$, where ${\cal P}(v)$ is the set of all $1$-element subsets of $N_G(v)$, that is, ${\cal P}(v)= \{\{x\} \colon x\in N_G(v)\}$, for every $v\in V_G$. Then the resulting ${\cal P}$-corona of $G$ is just the graph $S_2(G)$, called the {\em $2$-subdivision} of $G$ and obtained from $G$ by replacing each its edge $e = uv$ by a path $(u, u_e, v_e, v)$, where $u_e$ and $v_e$ are two new vertices. Now, if $G$ has no $2$-vertex component, then in the closed neighborhood of each vertex of $G$ there is a vertex of degree at least two, and therefore the family ${\cal P}$ satisfies the assumption of Theorem~\ref{thm:Pcoronas}. Thus, since $S_2(G)=G\circ \mathcal{P}$, we immediately have the following corollary.

\begin{wn} If $G$ is a graph with no $2$-vertex component, then $\gamma(S_2(G))= \gamma_{\rm cer}(S_2(G))$. \end{wn}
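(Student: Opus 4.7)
The plan is to invoke Theorem~\ref{thm:Pcoronas} applied to the specific family $\mathcal{P}=\{\mathcal{P}(v)\colon v\in V_G\}$ defined by $\mathcal{P}(v)=\{\{x\}\colon x\in N_G(v)\}$. First I would spell out the identification $G\circ \mathcal{P}\cong S_2(G)$ that is already asserted in the paragraph preceding the statement: the vertex $(v,1)$ of $G\circ \mathcal{P}$ plays the role of the original vertex $v$, and for each edge $e=uv$ of $G$ the two singleton-block vertices $(u,\{v\})$ and $(v,\{u\})$ play the roles of the subdivision vertices $u_e$ and $v_e$, so that the adjacencies prescribed by the definition of $E_{G\circ \mathcal{P}}$ reproduce exactly the replacement path $(u,u_e,v_e,v)$.

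Next I would observe that $|\mathcal{P}(v)|=|N_G(v)|=d_G(v)$ for every $v\in V_G$, so the set $\{u\in V_G\colon |\mathcal{P}(u)|\ge 2\}$ appearing as the hypothesis of Theorem~\ref{thm:Pcoronas} coincides with $D:=\{u\in V_G\colon d_G(u)\ge 2\}$. Hence the whole question reduces to verifying that $D$ is a dominating set of $G$.

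For this final step, the hypothesis that $G$ has no $2$-vertex component is exactly what is needed. Any vertex $v$ with $d_G(v)\ge 2$ already lies in $D$. Any vertex $v$ with $d_G(v)=1$ has a unique neighbor $u$; if $d_G(u)=1$ as well, then $\{u,v\}$ would induce a connected component of order two, contradicting the hypothesis, so $d_G(u)\ge 2$ and $v$ is dominated by $u\in D$. Applying Theorem~\ref{thm:Pcoronas} then gives $\gamma(G\circ \mathcal{P})=\gamma_{\cer}(G\circ \mathcal{P})$, which under the identification above reads $\gamma(S_2(G))=\gamma_{\cer}(S_2(G))$. There is no genuine obstacle: the only thing worth stating carefully is the isomorphism $G\circ \mathcal{P}\cong S_2(G)$, after which the result is an immediate application of the previous theorem combined with the one-line degree argument.
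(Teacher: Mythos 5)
Your argument is correct and is essentially the paper's own: the paper likewise identifies $S_2(G)$ with $G\circ\mathcal{P}$ for $\mathcal{P}(v)=\{\{x\}\colon x\in N_G(v)\}$, notes that $|\mathcal{P}(v)|=d_G(v)$, and observes that the absence of $2$-vertex components forces every closed neighborhood to contain a vertex of degree at least two, so that $\{u\in V_G\colon d_G(u)\ge 2\}$ dominates $G$ and Theorem~\ref{thm:Pcoronas} applies. (Like the paper, you tacitly ignore isolated vertices, for which $\{u\colon d_G(u)\ge 2\}$ fails to dominate; a $K_1$-component would need to be treated separately, though trivially.)
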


We finalize our discussion about the equality of the two domination parameters $\gamma$ and $\gamma_\cer$ for $\cP$-coronas of graphs by mentioning a hereditary property of the equality $\gamma=\gamma_{\rm cer}$. Let  $\mathcal{P}=\{\mathcal{P}(v)\colon v\in V_G\}$ and $\mathcal{P}'=\{\mathcal{P}'(v)\colon v\in V_G\}$ be two families of partitions of vertex neighborhoods of a graph $G$. We say that $\mathcal{P}'$ is a~{\em refinement} of $\mathcal{P}$, and write $\mathcal{P}'\prec \mathcal{P}$, if for every $v\in V_G$ and every $A\in \mathcal{P}'(v)$ there exists $B\in \mathcal{P}(v)$ such that $A\subseteq B$. Now it follows from Theorem~\ref{thm:Pcoronas} that if $\gamma(G\circ \mathcal{P})= \gamma_{\cer}(G\circ \mathcal{P})$,  then the equality $\gamma(G\circ \mathcal{P}')= \gamma_{\cer}(G\circ \mathcal{P}')$ also holds for any refinement $\mathcal{P}'$ of $\mathcal{P}$. It is then natural to ask which families ${\cal P}$ are extremal with respect to the equality $\gamma=\gamma_\cer$. We say that a family ${\cal P}$ of partitions of vertex neighborhoods of a graph $G$ is {\em maximal with respect to the equality $\gamma=\gamma_\cer$} if  $\gamma(G\circ \mathcal{P})= \gamma_{\cer}(G\circ \mathcal{P})$ and  $\gamma(G\circ \mathcal{P}'')< \gamma_{\cer}(G\circ \mathcal{P}'')$ whenever $\mathcal{P} \prec \mathcal{P}''$. We now have the following characterization of a maximal family of partitions of vertex neighborhoods of a graph.

\begin{thm}
Let ${\cal P}=\{\mathcal{P}(v)\colon v\in V_G\}$ be of a family of partitions of vertex neighborhoods of a graph $G$. Then the family ${\cal P}$  is maximal with respect to the equality $\gamma=\gamma_\cer$ if and only if $|\cP(v)|\le 2$ for each vertex $v$ of $G$, and the set $D=\{v \in V_G \colon |\cP(v)| = 2\}$ is a minimal dominating set of $G$.
\end{thm}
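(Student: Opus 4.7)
The plan is to reduce everything to Theorem~\ref{thm:Pcoronas} by tracking, for any family $\mathcal{Q}=\{\mathcal{Q}(v)\colon v\in V_G\}$ of neighborhood partitions, the set $D^*(\mathcal{Q}) := \{v\in V_G \colon |\mathcal{Q}(v)| \ge 2\}$. The basic monotonicity to record is that if $\mathcal{Q}_1 \prec \mathcal{Q}_2$, then $|\mathcal{Q}_1(v)| \ge |\mathcal{Q}_2(v)|$ for every $v$, so $D^*(\mathcal{Q}_2) \subseteq D^*(\mathcal{Q}_1)$; and by Theorem~\ref{thm:Pcoronas}, $\gamma(G\circ \mathcal{Q})=\gamma_{\cer}(G\circ \mathcal{Q})$ if and only if $D^*(\mathcal{Q})$ is a dominating set of $G$.

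For the ``if'' direction, suppose $|\mathcal{P}(v)|\le 2$ for every $v$ and $D := \{v\colon |\mathcal{P}(v)|=2\} = D^*(\mathcal{P})$ is a minimal dominating set. Theorem~\ref{thm:Pcoronas} immediately gives $\gamma(G \circ \mathcal{P}) = \gamma_{\cer}(G \circ \mathcal{P})$. Now let $\mathcal{P} \prec \mathcal{P}''$ with $\mathcal{P}'' \neq \mathcal{P}$. Because $|\mathcal{P}(v)| \le 2$ already, the only way $\mathcal{P}''(u)$ can differ from $\mathcal{P}(u)$ at a vertex $u$ is that $|\mathcal{P}(u)|=2$ and $\mathcal{P}''(u)=\{N_G(u)\}$; that is, $\mathcal{P}''$ is obtained from $\mathcal{P}$ by merging the two parts of $\mathcal{P}(u)$ for at least one $u\in D$. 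Hence $D^*(\mathcal{P}'') \subsetneq D$, so by minimality of $D$ the set $D^*(\mathcal{P}'')$ is not dominating, and Theorem~\ref{thm:Pcoronas} yields $\gamma(G\circ\mathcal{P}'') < \gamma_{\cer}(G\circ\mathcal{P}'')$, establishing maximality.

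For the ``only if'' direction, assume $\mathcal{P}$ is maximal. First I would rule out $|\mathcal{P}(v^*)| \ge 3$: pick such a $v^*$, merge any two parts of $\mathcal{P}(v^*)$ into one, and leave every other $\mathcal{P}(v)$ unchanged to obtain a strict coarsening $\mathcal{P}''$ of $\mathcal{P}$. Since $|\mathcal{P}''(v^*)| = |\mathcal{P}(v^*)| - 1 \ge 2$, we have $D^*(\mathcal{P}'')=D^*(\mathcal{P})$, so $D^*(\mathcal{P}'')$ still dominates and by Theorem~\ref{thm:Pcoronas} the equality $\gamma=\gamma_{\cer}$ persists for $\mathcal{P}''$, contradicting maximality. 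Hence $|\mathcal{P}(v)|\le 2$ for all $v$, and applying Theorem~\ref{thm:Pcoronas} to $\mathcal{P}$ itself shows that $D := D^*(\mathcal{P}) = \{v\colon |\mathcal{P}(v)|=2\}$ dominates $G$. If $D$ were not minimal, some $u \in D$ would satisfy that $D-\{u\}$ dominates; merging the two parts of $\mathcal{P}(u)$ into one then yields a strict coarsening $\mathcal{P}''$ with $D^*(\mathcal{P}'')=D-\{u\}$ still dominating, again contradicting maximality via Theorem~\ref{thm:Pcoronas}. Therefore $D$ is a minimal dominating set.

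There is no serious obstacle here; the argument is essentially a two-line reduction to Theorem~\ref{thm:Pcoronas} once the monotonicity of $D^*(\cdot)$ under refinement is in hand. The only care required is bookkeeping: verifying at each step that the constructed $\mathcal{P}''$ is a genuine strict coarsening (i.e.\ $\mathcal{P} \prec \mathcal{P}''$ and $\mathcal{P}'' \neq \mathcal{P}$), and that $D^*(\mathcal{P}'')$ is exactly the predicted subset of $D^*(\mathcal{P})$, so that the equivalence provided by Theorem~\ref{thm:Pcoronas} can be invoked in the correct direction.
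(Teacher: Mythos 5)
Your proposal is correct and follows essentially the same route as the paper: both directions reduce to Theorem~\ref{thm:Pcoronas} via the observation that coarsening a partition family can only shrink the set $\{v\colon |\mathcal{P}(v)|\ge 2\}$, with the same three coarsening constructions (merging parts at a vertex with $|\mathcal{P}(v)|\ge 3$, collapsing $\mathcal{P}(u)$ to $\{N_G(u)\}$ for a removable $u\in D$, and the observation that any proper coarsening of a family with all $|\mathcal{P}(v)|\le 2$ strictly shrinks $D$). The only cosmetic difference is that you merge two parts at a time where the paper merges $X_1\cup\dots\cup X_{k-1}$ in one step; the logic is identical.
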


\begin{proof}
Assume that $\cP$  is maximal and suppose to the contrary that there is a vertex $v \in V_G$ such that  $|\cP(v)| > 2$, say $\cP(v)=\{X_1,\ldots, X_k\}$ and $k \ge 3$. Consider the family $\cP'=\{\mathcal{P}'(x)\colon x\in V_G\}$ in which $\cP'(v)= \{X_1 \cup \ldots \cup X_{k-1}, X_k\}$ and $\cP'(u)=\cP(u)$ if $u\in V_G-\{v\}$.
Then $\cP$ is a proper refinement of $\cP'$ and $\{x\in V_G\colon |{\cal P}(x)|\ge 2\}=
\{y\in V_G\colon |{\cal P}'(y)|\ge 2\}$. From this and from Theorem~\ref{thm:Pcoronas}
it follows that the equalities $\gamma(G\circ \cP)= \gamma_{\rm cer}(G \circ \cP)$ and
$\gamma(G\circ \cP')= \gamma_{\rm cer}(G \circ \cP')$ hold simultaneously, which contradicts the maximality of $\cP$. This proves that $|\cP(v)|\le 2$ for each vertex $v \in V_G$. Consequently, by Theorem~\ref{thm:Pcoronas}, $D= \{v \in V_G \colon |\cP(v)| = 2\}= \{v \in V_G \colon |\cP(v)| \ge 2\}$ is a dominating set of $G$. We now claim that $D$ is a minimal dominating set of $G$. Suppose not. Then $D-\{u\}$ is a dominating set of $G$ for some $u\in D$. Now $\cP$ is a proper refinement of the family $\cP''= \{\mathcal{P}''(y) \colon y\in V_G\}$ in which $\cP''(u)= \{N_G(u)\}$ and $\cP''(y)=\cP(y)$ for each $y\in V_G-\{u\}$. Consequently, since $\{y\in V_G \colon |{\cal P}''(y)|\ge 2\}= D-\{u\}$ is a dominating set of $G$, Theorem~\ref{thm:Pcoronas} implies that $\gamma(G\circ \cP'')= \gamma_{\rm cer}(G \circ \cP'')$, which again contradicts the maximality of $\cP$. This proves the minimality of $D$.

We now assume that ${\cal P}=\{\mathcal{P}(v)\colon v\in V_G\}$ is such a family of partitions of vertex neighborhoods of $G$ that $|\cP(v)|\le 2$ for each $v \in V_G$, and $D=\{v \in V_G \colon |\cP(v)| = 2\}$ is a~minimal dominating set of $G$. The last assumption and Theorem~\ref{thm:Pcoronas} imply that $\gamma(G\circ {\cal P})= \gamma_{\rm cer}(G \circ {\cal P})$. We now claim that the family $\cP$ is maximal.
Suppose to the contrary that $\cP$ is not maximal. Then $\cP$ is a proper refinement of some family $\cP'$ for which $\gamma(G\circ \cP')= \gamma_{\rm cer}(G \circ \cP')$.
The last equality and Theorem~\ref{thm:Pcoronas} guarantee that the set $D'=\{x \in V_G \colon |\cP'(x)| \ge 2\}$ is a dominating set of $G$. Finally, from the assumption and from the fact that ${\cal P}$ is a proper refinement of ${\cal P}'$, it follows that $2\ge |{\cal P}(x)|\ge |{\cal P}'(x)|\ge 1$ for every $x\in V_G$, $2\ge |{\cal P}(y)|> |{\cal P}'(y)|\ge 1$ for at least one $y\in V_G$, and therefore the dominating set $D'$ of $G$ is a proper subset of $D$, which was assumed to be a minimal dominating set of~$G$, a final contradiction. \end{proof}

\section{Properties of upper certified domination number}

In this section we study main properties of upper certified domination number $\Gamma_{\cer}$. We give a characterization of all graphs with $\Gamma_{\cer} = n$ and $\Gamma_{\cer} = n-2$, respectively. In addition, we focus on the relation between upper domination number and upper certified domination number of a graph. We start with the following useful lemma.

\begin{lem}\label{lem:C} Let $G$ be a connected graph of order at least two. If $D$ is a minimal certified dominating set of $G$ and $v$ is a~vertex such that $N_G[v]\subseteq D$, then  $v\in L_G\cup S_G^1$, that is, $v$ is a leaf or a weak support of $G$.  In addition, the induced subgraph $G[\{v\in D\colon N_G[v]\subseteq D\}]$ is a corona.  \end{lem}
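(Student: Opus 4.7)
My plan is to prove the lemma in two stages: first the vertex-classification assertion (every $v\in T:=\{u\in D\colon N_G[u]\subseteq D\}$ is a leaf or a weak support of $G$), and then, using this, the structural claim that $G[T]$ is a corona. The driving idea in both stages is minimality of $D$: whenever I assume a forbidden configuration I try to exhibit a proper subset of $D$ that is still a certified dominating set, contradicting minimality.

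For the classification I fix $v\in T$ and consider its type in $G$. If $v$ is a strong support with distinct leaf neighbors $w_1,w_2$, both $w_i$ lie in $T$ because $N_G[w_i]=\{w_i,v\}\subseteq D$; removing $\{w_1,w_2\}$ from $D$ keeps them dominated by $v$ and changes the certification count only at $v$, where it grows by exactly $2$, so $D\setminus\{w_1,w_2\}$ is a proper certified dominating subset, a contradiction. If $v$ is neither a leaf nor a support, then $d_G(v)\ge 2$ and all neighbors of $v$ are non-leaves in $D$. Because $v\in T$ has no neighbor in $V_G-D$, the set $D-\{v\}$ still dominates $G$, so minimality forces some $u\in N_G(v)\cap T$. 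Passing to the connected component $C$ of $G[T]$ that contains $v$, I check that $D-C$ already satisfies the certification inequality (vertices of $D-T$ had count $\ge 2$ before, and vertices of $T\setminus C$ have no neighbor in $C$). The only remaining obstruction to a contradiction is a vertex $y\in C$ whose $G$-neighbors all lie in $C$; to kill that case I would enlarge the removal set to $S=\{v\}\cup\{w_u\colon u\in N_G(v)\cap T\text{ is a weak support with partner }w_u\}$, iterating when $v$ itself has further non-support $T$-neighbors, and then verify by direct counting that each weak-support neighbor $u$ ends up with certification count exactly $2$ (one contribution from $v$, one from $w_u$) while no other $T$-vertex sees its count drop to $1$.

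Once the classification is in hand, the corona structure follows from two observations. For a leaf $v\in T$, its unique $G$-neighbor $u$ must also lie in $T$: otherwise $u\notin T$ would have $\ge 2$ neighbors in $V_G-D$, so its certification count would remain $\ge 2$ after deleting $v$, and $D-\{v\}$ would be a certified dominating set---contradicting minimality. Thus $v$ has degree $1$ in $G[T]$. For a weak support $v\in T$ with leaf partner $w$, clearly $w\in T$ since $N_G[w]=\{w,v\}\subseteq D$; any other $T$-neighbor of $v$ is a non-leaf, so (by the classification) a weak support, hence has its own partner in $T$ and thus degree at least $2$ in $G[T]$, hence is not a leaf of $G[T]$. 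So $v$ has exactly one leaf-of-$G[T]$ neighbor, namely $w$, which is precisely the defining property of a corona.

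The hardest point will be the internal sub-case inside the classification: when $C$ contains a vertex whose $G$-neighbors all lie in $C$ and which itself could be of the non-leaf, non-support type. Everything else---removing the two leaf neighbors of a strong support, removing an externally-dominated component $C$ outright, and the corona deduction---reduces to one-line count checks. For the internal sub-case I expect to need either an induction on $|C|$ or, more directly, a simultaneous removal of such an internal non-support vertex together with the leaf partners of all of its weak-support $T$-neighbors, followed by a case-by-case verification of the certification inequality at every remaining $T$-vertex.
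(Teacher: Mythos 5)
Your peripheral arguments are sound: the elimination of strong supports from $T=\{u\in D\colon N_G[u]\subseteq D\}$ by deleting two leaf neighbors, the observation that for an internal vertex $v\in T$ the set $D-\{v\}$ still dominates and hence forces a neighbor $u\in N_G(v)\cap T$, and the stage-two corona deduction (in particular your argument that the support of a leaf in $T$ must itself lie in $T$, which the paper leaves implicit) are all correct. But the central case --- a vertex of $T$ that is neither a leaf nor a support --- is only sketched, and the sketch as written does not close. You correctly identify the obstruction to deleting the whole component $C$ of $G[T]$ containing $v$, namely a vertex $y\in C$ with $N_G(y)\subseteq C$, but your proposed fix (delete $v$, the leaf partners of its weak-support $T$-neighbors, and then ``iterate'' over further internal $T$-neighbors) runs into exactly the same obstruction. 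Concretely, suppose the internal vertices of $T$ induce a cycle $y_1y_2\cdots y_ky_1$ with each $y_i$ of degree $2$ in $G$. Deleting any one $y_i$ leaves its two cycle neighbors with certification count exactly $1$, so your cascade forces the deletion of all of them; but then no $y_i$ is dominated by what remains, so no contradiction is obtained and the proof stalls. You acknowledge this is the hard sub-case, but ``an induction on $|C|$ or a simultaneous removal followed by case-by-case verification'' is a plan, not an argument, and the naive simultaneous removal provably fails on the configuration above.

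The missing idea is that one must \emph{keep} part of the bad set rather than delete all of it. The paper works with $D'=T-(L_G\cup S^1_G)$, fixes a \emph{maximal independent set} $I$ of vertices of degree at least two in $F=G[D']$, and deletes only $D'-I$ (together with the set $L_1$ of leaf partners of those weak supports adjacent to $D'-I$, which is the same repair you propose). Maximality of $I$ guarantees that every vertex of $D'-I$ is either adjacent to $I$ or has degree at most one in $F$ and hence a dominator in $D-(L_G\cup D')$, which restores domination of the deleted vertices; independence and the degree condition guarantee each retained vertex of $I$ acquires at least two neighbors outside the new set. In the cycle example this keeps an alternating set of cycle vertices and works immediately. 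Without some device of this kind --- choosing what to retain globally rather than propagating deletions locally --- your classification step cannot be completed, so the proposal has a genuine gap at the heart of the lemma.
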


\begin{proof} The result is obvious if $G=K_2$. Thus assume that $G$ is a graph of order at least three. Suppose, contrary to our claim, that the set $D'=\{v\in D\colon N_G[v]\subseteq D\}- (L_G\cup S_G^1)$ is non-empty. Let $F$ denote the subgraph $G[D']$. Let $I$ be a maximal set of independent vertices of degree at least two in $F$. The set  $I$ is a proper (possibly empty) subset of $D'$ and, if $I$ is non-empty, every vertex in $I$ has at least two neighbors in $D'-I$. Now let $I'$ denote the set $D'-N_F[I]$. We claim that $I'$     is dominated by the set $D-(L_G\cup D')$. This is trivially true if $I'=\emptyset$. Thus assume that $I'\not=\emptyset$ and $v_0\in I'$. Then $d_G(v_0)\ge 2$ (as $v_0\not \in L_G$) but $d_F(v_0)\le 1$ (otherwise $I\cup\{v_0\}$ would be a larger set of independent vertices of degree at least two in $F$) and therefore $N_G(v_0)-V_F$ is a~non-empty subset of $D-(L_G\cup D')$. This proves our claim. Consequently, $D-(L_G
\cup D')$ dominates $I'$ while $I$ is a certified dominating set of $F-I'$. This implies that $D-(D'-I)$ is a dominating set of $G$. In addition, it is no problem to observe that $D-((D'-I)\cup L')$ is a dominating set of $G$ for every subset $L'$ of $L_G$
(as every $x\in L_G$ is dominated by its only neighbor in $N_G(L_G)$ and $ N_G(L_G)\subseteq D$).

Let us consider the function $s\colon L_G\rightarrow S_G$, where $s(x)$ is the only neighbor of a leaf $x$, i.e. $s(x)$ is the only element of the set $N_G(x)$. This function is not necessarily an injection, but, since $D$ is a minimal certified dominating set of $G$, the restriction of $s$ to $L_G\cap D$ is indeed an injection and, in addition, the set $N_G[s(x)]$ is a~subset of $D$ for every $x\in L_G\cap D$. Moreover, the map $s\colon L_G\cap D\to S_G$ is a bijection between $L_G\cap D$ and $\{y\in S_G\colon N_G[y]\subseteq D\}$ $(=N_G(L_G\cap D))$. Let $S_0$ and $S_1$ be the sets $\{x\in N_G(L_G\cap D)\colon N_G(x)\cap (D'-I)= \emptyset\}$ and $\{x\in N_G(L_G\cap D)\colon N_G(x)\cap (D'-I)\not= \emptyset\}$, respectively. Now let $L_0 = N_G(S_0)\cap L_G$ $(=s^{-1}(S_0))$ and $L_1 = N_G(S_1)\cap L_G$ $(=s^{-1}(S_1))$. It is obvious that $S_0\cap S_1=\emptyset$, $S_0\cup S_1= N_G(L_G\cap D)$, $L_0\cap L_1= \emptyset$ and $L_0\cup L_1= L_G\cap D$.

We now prove that the set $D''= D-((D'-I)\cup L_1)$
is a certified dominating set of $G$. Since $D''$ is a dominating set of $G$, it suffices to show that no vertex belonging to $D''$ has exactly one neighbor in $V_G-D''$. To show this, let us take a vertex $x\in D''$. Since $D''\subseteq D$ and $D$ is a certified dominating set of $G$, we have $|N_G(x)\cap (V_G-D)|\ge 2$ or $|N_G(x)\cap (V_G-D)|=0$. In the first case $|N_G(x)\cap (V_G-D'')|\ge 2$, as $V_G-D \subseteq V_G-D''$. Thus assume that $|N_G(x)\cap (V_G-D)|=0$. Then $N_G[x]\subseteq D$ and, since $x\not\in (D'-I)\cup L_1$, $x$ is an element of the set $I\cup S_1 \cup (L_0\cup S_0)$. If $x\in I$, then $|N_F(x)\cap (D'-I)|\ge 2$ (by the definition of $I$) and therefore $|N_G(x)\cap (V_G-D'')|\ge 2$ (as $N_F(x)\subseteq N_G(x)$ and $D'-I\subseteq V_G-D''$). If $x\in S_1$ $(=N_G(L_1))$, then $N_G(x)\cap L_1\not=\emptyset$ and $N_G(x)\cap (D'-I)\not=\emptyset$, and therefore  $|N_G(x)\cap (V_G-D'')|\ge 2$ as $L_1$ and $D'-I$ are disjoint subsets of $V_G-D''$. It remains to show that $N_G(x)\cap (V_G-D'')=\emptyset$ (or, equivalently, that $N_G(x) \subseteq D''$) if $x\in L_0\cup S_0$. Since $D''= D-((D'-I)\cup L_1)$, it suffices to show that $N_G(x)\subseteq D$, $N_G(x)\cap (D'-I)=\emptyset$ and $N_G(x)\cap L_1= \emptyset$ if $x\in L_0\cup S_0$. We know already that $N_G[S_0]\subseteq D$, and hence $N_G(L_0\cup S_0)= N_G(L_0)\cup N_G(S_0)= S_0\cup N_G(S_0)= N_G[S_0]\subseteq D$, which proves that  $N_G(x)\subseteq D$ if $x\in L_0\cup S_0$. It follows from the definition of the set $S_0$ that $N_G(S_0)\cap (D'-I)=\emptyset$ and therefore  $N_G[S_0]\cap (D'-I)=\emptyset$ (as $S_0$ and $D'$ are disjoint). Consequently $N_G(L_0\cup S_0)\cap (D'-I)= N_G[S_0]\cap (D'-I)=\emptyset$ and this proves that $N_G(x)\cap (D'-I)=\emptyset$ if $x\in L_0\cup S_0$. Finally, it follows from the properties of the sets $S_0$, $S_1$, $L_0$ and $L_1$ that $N_G(L_1)\cap (L_0\cup S_0)= S_1\cap (L_0\cup S_0)= \emptyset$ and consequently $N_G(x)\cap L_1=\emptyset$ if $x\in L_0\cup S_0$. This completes the proof of the fact that $D''$ is a certified dominating set of $G$, which, however, contradicts the minimality of $D$ (as $D''$ is a~proper subset of $D$). This proves that the set $\{v\in D\colon N_G[v]\subseteq D\}$ is a subset of $L_G\cup S_G^1$, and therefore the induced subgraph  $G[\{v\in D\colon N_G[v]\subseteq D\}]$ is a corona.   \end{proof}

Now we give a characterization of the graphs for which the upper certified domination number is equal to their order.

\begin{thm} \label{thm-Gamma=n} Let $G$ be a graph of order $n$. Then the following statements are equivalent: \begin{enumerate} \item[$(1)$] every non-trivial component of $G$ is a corona; \item[$(2)$] $\gamma_{\cer}(G)=n$; \item[$(3)$] $\Gamma_{\cer}(G)=n$.
\end{enumerate} \end{thm}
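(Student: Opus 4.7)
Plan. I aim to prove the cyclic implications (2) $\Rightarrow$ (3) $\Rightarrow$ (1) $\Rightarrow$ (2). Two preliminary observations get used repeatedly: $\gamma_{\cer}(G)\le \Gamma_{\cer}(G)\le n$, and $V_G$ is itself always a certified dominating set of $G$, because the condition ``$0$ or at least $2$ neighbors outside $D$'' is vacuous when $V_G-D=\emptyset$. In particular, $\gamma_{\cer}(G)=n$ forces every certified dominating set to equal $V_G$, which is then also the unique minimal certified dominating set, giving $\Gamma_{\cer}(G)=n$; this settles (2) $\Rightarrow$ (3).

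For (3) $\Rightarrow$ (1), the assumption $\Gamma_{\cer}(G)=n$ means that $V_G$ itself is a minimal certified dominating set of $G$. I would first verify a componentwise descent: for each non-trivial component $C$ of $G$, the set $V_C$ is a minimal certified dominating set of $C$. Indeed, any proper certified dominating subset $D_C'\subsetneq V_C$ in $C$ would lift to the proper subset $(V_G\setminus V_C)\cup D_C'$ of $V_G$, which remains a certified dominating set of $G$ because vertices outside $C$ have all their neighbors inside this lifted set---contradicting minimality of $V_G$ in $G$. Then Lemma~\ref{lem:C}, applied to $C$ with $D=V_C$, yields directly that $C$, which equals the induced subgraph $C[\{v\in V_C:N_C[v]\subseteq V_C\}]$, is a corona.

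The most substantive direction is (1) $\Rightarrow$ (2). Assuming every non-trivial component is a corona, I would take an arbitrary certified dominating set $D$ of $G$ and show $D=V_G$ by contradiction: suppose $v\in V_G\setminus D$; since $v$ must be dominated, $v$ cannot be isolated, so $v$ lies in a non-trivial component which by hypothesis has the form $H\circ K_1$. Writing $v_0\in V_H$ for an inner vertex and $v_0'$ for its attached pendant, with $v\in\{v_0,v_0'\}$, the core case split is as follows. If $v=v_0$, then $v_0'$ must belong to $D$ (otherwise the only neighbor $v_0$ of $v_0'$ is also outside $D$ and $v_0'$ is not dominated), but then $v_0'\in D$ has its unique neighbor $v_0$ in $V_G\setminus D$, violating the certified condition; if instead $v=v_0'$, then $v_0\in D$, and the certified condition at $v_0$---which already has $v_0'\in V_G\setminus D$ as a neighbor---forces a second neighbor $u\in V_H\setminus D$ of $v_0$, to which the preceding inner-vertex argument applies and closes the proof.

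I expect the main obstacle to be the componentwise descent in (3) $\Rightarrow$ (1): one must be careful to check that removing vertices from a single component of $V_G$ preserves the certified condition in every other component of $G$, which hinges on vertices in other components having all their neighbors among the retained vertices. The remaining case analysis in (1) $\Rightarrow$ (2) is routine once the reduction to a single corona component has been made.
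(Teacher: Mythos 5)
Your proposal is correct, but it takes a genuinely different (and more self-contained) route than the paper. The paper only proves the equivalence $(2)\Leftrightarrow(3)$ directly --- via the chain $n=\gamma_{\cer}(G)\le\Gamma_{\cer}(G)\le n$ in one direction, and the observation that $\Gamma_{\cer}(G)=n$ makes $V_G$ a minimal certified dominating set with no proper certified dominating subset in the other --- and it \emph{cites} the companion paper \cite{nasz} for the equivalence $(1)\Leftrightarrow(2)$. You instead close the full cycle $(2)\Rightarrow(3)\Rightarrow(1)\Rightarrow(2)$: your $(3)\Rightarrow(1)$ applies Lemma~\ref{lem:C} with $D=V_C$ on each non-trivial component $C$ (after the componentwise descent, which you justify correctly: deleting vertices from one component leaves every vertex of the other components with zero neighbours outside the retained set, so the certified condition survives), whereupon $\{v\in V_C\colon N_C[v]\subseteq V_C\}=V_C$ and the lemma says $C$ itself is a corona; and your $(1)\Rightarrow(2)$ is a short direct case analysis on whether a missing vertex is a pendant or an inner vertex of a corona component. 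What your approach buys is independence from the external reference, and it exhibits Lemma~\ref{lem:C} as the real engine behind the $\Gamma_{\cer}(G)=n$ characterization; what the paper's approach buys is brevity, since $(1)\Leftrightarrow(2)$ is already on record. Two cosmetic remarks: your $(2)\Rightarrow(3)$ can be compressed to $n=\gamma_{\cer}(G)\le\Gamma_{\cer}(G)\le n$ without discussing uniqueness, and in $(1)\Rightarrow(2)$ it is worth stating explicitly that the second neighbour $u$ of $v_0$ outside $D$ must be an inner vertex of the corona (because $v_0'$ is the unique pendant attached to $v_0$), which you do implicitly and which is what lets the inner-vertex case close the argument.
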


\begin{proof} The equivalence of (1) and (2) has been proved in \cite{nasz}. It remains
to prove the equivalence of (2) and (3). If $\gamma_{\cer}(G)=n$, then $n= \gamma_{\cer}(G)\leq \Gamma_{\cer}(G)\leq n$ and, therefore,  $\Gamma_{\cer}(G)=n$. Assume now that $\Gamma_{\cer}(G)=n$. Then $V_G$ is a minimal certified dominating set of $G$, and, consequently, no proper subset of $V_G$ is a certified dominating set of $G$. This implies that it cannot be $\gamma_{\cer}(G)<n$, and, thus, it must be $\gamma_{\cer}(G) =n$. \end{proof}

It follows from the definition of a certified dominating set that if $G$ is a graph of order $n$, then no $n-1$ vertices form a certified dominating set of $G$. Consequently, either $\Gamma_{\cer}(G)=n$ or $\Gamma_{\cer}(G)\le n-2$. It is natural then to characterize all graphs $G$ of order $n$ for which $\Gamma_{\cer}(G)=n-2$. We need the following definitions. A {\em simple diadem\/} is a graph obtained from a corona by adding one new vertex and joining it to exactly one support of the corona, while a~{\em diadem\/} is a graph obtained  from a~corona by adding one new vertex and joining it to one leaf and its neighbor in the corona.

\begin{thm}\label{thm1} Let $G$ be a connected graph of order $n\geq 3$. Then $\Gamma_{\cer}(G)=n-2$ if and only if $G$ is a simple diadem, a diadem, or one of the   graphs $K_2 + \overline{K}_{n-2}$ and  $\overline{K}_2+\overline{K}_{n-2}$. \end{thm}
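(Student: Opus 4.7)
For the forward direction, I would verify each of the four graph classes by exhibiting an explicit minimal certified dominating set of size $n-2$, and then invoke Theorem \ref{thm-Gamma=n} to rule out $\Gamma_{\cer}(G)=n$ (none of the four is a corona when $n\geq 3$). For $K_2+\overline{K}_{n-2}$ and $\overline{K}_2+\overline{K}_{n-2}$ the $(n-2)$-element independent part itself is such a set. For a simple diadem or diadem obtained from $H\circ K_1$ by attaching $w$ at a support $s$ whose leaf is $l$ (respectively, at both $s$ and $l$), take $D=V_G-\{l,w\}$: then $s$ has exactly $\{l,w\}$ as its neighbors in $V_G-D$, every other vertex of $D$ has no neighbor in $V_G-D$, and a short inspection of proper subsets confirms minimality.

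For the backward direction, let $D$ be a minimal certified dominating set with $|D|=n-2$ and write $V_G-D=\{a,b\}$. Certification partitions $D=D_0\cup D_2$ according to whether a vertex has $0$ or $2$ neighbors in $\{a,b\}$, and domination of $\{a,b\}$ forces $D_2\ne\emptyset$. In Case 1 ($D_0=\emptyset$), I would argue that $G[D]$ is edgeless: if $uv$ were an edge in $D=D_2$ with $|D_2|\geq 2$, then $D-\{u\}$ would still dominate (via $v$ and via the remaining $D_2$-vertices) and still satisfy certification (each surviving $w\in D_2-\{u\}$ still has both $a$ and $b$ as neighbors in $\{a,b,u\}$), contradicting minimality. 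Thus $D$ is independent with every vertex adjacent to both $a,b$, giving $G=K_2+\overline{K}_{n-2}$ if $ab\in E_G$ and $G=\overline{K}_2+\overline{K}_{n-2}$ otherwise.

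In Case 2 ($D_0\ne\emptyset$), the identity $\{v\in D:N_G[v]\subseteq D\}=D_0$ together with Lemma \ref{lem:C} gives $D_0\subseteq L_G\cup S_G^1$ and that $G[D_0]$ is a corona. The plan is then to (i) show $|D_2|=1$, and (ii) once $D_2=\{v^*\}$, use minimality of $D$ together with connectivity of $G$ to force $v^*$ to have exactly one neighbor in $D_0$, which must be a support of the corona $G[D_0]$. Identifying $G[D_0\cup\{v^*\}]$ with a corona $H\circ K_1$ in which $v^*$ is the pendant leaf attached to $s$, the two ``outside'' vertices $a,b$ realize $G$ as a diadem when $ab\in E_G$ and as a simple diadem when $ab\notin E_G$.

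The main obstacle is Case 2(i): proving $|D_2|=1$. Because minimality of a certified dominating set is strictly stronger than the statement ``no single-vertex deletion is certified dominating'', a naive single-removal argument does not suffice; the argument must instead produce a proper subset $D'\subsetneq D$ obtained by removing several vertices simultaneously, typically mixing one element of $D_2$ with a dominating subfamily of $D_0$ chosen according to the leaf/support matching of the corona $G[D_0]$. The fact (from Lemma \ref{lem:C}) that every vertex of $D_0$ is a leaf or weak support of $G$ is exactly what ensures such a $D'$ can be constructed so as to preserve certification.
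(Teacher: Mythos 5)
Your skeleton coincides with the paper's: the same partition $D=D_0\cup D_2$ of a $\Gamma_{\cer}$-set with $|V_G-D|=2$, the same appeal to Lemma~\ref{lem:C} when $D_0\neq\emptyset$, and the same split into $|D_2|=1$ versus $|D_2|\geq 2$. The forward direction and your Case~1 are sound. In Case~2, however, there are two genuine problems. First, step~(i) --- ruling out $|D_2|\geq 2$ --- is the technical heart of the theorem, and you only describe what kind of argument would be needed rather than giving one. The paper's construction is concrete: pick $u\in D_2$ adjacent to some vertex of $D_0$ (such a $u$ exists by connectivity), let $L=\{t\in L_G\colon d_G(t,u)=2\}$ be the set of leaves whose supports are adjacent to $u$, and show that $D'=D-(\{u\}\cup L)$ is a certified dominating set, contradicting minimality. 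The verification --- that each support adjacent to $u$ acquires the two outside neighbors $u$ and its own leaf, that each vertex of $D_2-\{u\}$ still has the two original outside vertices as neighbors, and that every remaining vertex of $D_0$ keeps all its neighbors inside $D'$ --- is exactly the part you defer, so the proof is incomplete at its most delicate point.

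Second, step~(ii) is false as stated: it is not true that $v^*$ must have exactly one neighbor in $D_0$, nor that $G[D_0\cup\{v^*\}]$ is a corona with $v^*$ as a pendant leaf. Take the simple diadem obtained from $P_3\circ K_1$ by attaching the new vertex $w$ to the middle support $s$; with $D=V_G-\{l,w\}$ (where $l$ is the leaf of $s$), one checks that $D$ is a minimal certified dominating set of size $n-2$ with $D_2=\{s\}$, yet $s$ has two neighbors in $D_0$ (the two outer supports), and $G[D_0\cup\{s\}]$ is not a corona since $s$ is neither a leaf nor adjacent to a leaf there. Even in the smallest nontrivial instance ($K_2\circ K_1$ with a pendant vertex added at one support), $G[D_0\cup\{v^*\}]$ is a $P_3$, which is not a corona. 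The corona underlying the diadem or simple diadem is $G[D\cup\{x\}]$, where $x$ is the vertex of $V_G-D$ that becomes the pendant leaf of $v^*$; what connectivity and minimality actually yield (and what the paper proves) is that $v^*$ is adjacent to at least one support of every component of $G[D_0]$ of order at least four and to exactly one vertex of every order-two component, after which the two outside vertices are attached to $v^*$, producing a diadem or a simple diadem according to whether they are adjacent.
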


\begin{proof} It is a simple matter to observe that if a connected graph $G$ of order $n\ge 3$ is a simple diadem, a diadem, $K_2 + \overline{K}_{n-2}$ or $\overline{K}_2+\overline{K}_{n-2}$, then  $\Gamma_{\cer}(G) =n-2$.

Assume now that $G$ is a connected graph of order $n\geq 3$ for which $\Gamma_{\cer}(G) =n-2$. Let $D$ be a $\Gamma_{\cer}$-set of $G$, and let $x$ and $y$ be the only vertices in $V_G-D$. Let $D_0$ and $D_2$ be the sets $\{v\in D\colon N_G[v]\subseteq D\}$ and $\{v\in D\colon |N_G(v)\cap (V_G-D)|\geq 2\}$, respectively.

Assume first that $D_0=\emptyset$. Then $D_2=D=V_G-\{x,y\}$ and the minimality of $D$ easily implies the independence of $D$. Thus $G[D]= \overline{K}_{n-2}$ and now, from the fact that every vertex in $D$ is adjacent to both $x$ and $y$, it follows that $G$ is one of the graphs $K_2 + \overline{K}_{n-2}$ and $\overline{K}_2+\overline{K}_{n-2}$ (depending on whether or not $x$ and $y$ are adjacent).

Assume now that $D_0\not=\emptyset$. It follows from  Lemma~\ref{lem:C} that every vertex in $D_0$ is a leaf or a weak support of $G$ and the induced subgraph $G[D_0]$ of $G$ is a corona. We consider two subcases $|D_2|=1$ and $|D_2|\ge 2$ separately.

First assume that $|D_2|=1$ and let $z$ be the only vertex in $D_2$. Let $F$ be a component of $G[D_0]$. It follows from the connectivity of $G$ that $z$ is adjacent to at least one support vertex of $F$ if $F$ is of order at least four. If $F$ is of order two, then $z$ is adjacent to exactly one vertex of $F$ (for otherwise $z$ would be adjacent to both vertices of $F$ and then the proper subset $D-V_F$ of $D$ would be a certified dominating set of $G$, contrary to the minimality of $D$). From the above and from the fact that $z$ is adjacent to both $x$ and $y$ it follows that $G$ is a diadem or a simple diadem (depending on whether or not $x$ and $y$ are adjacent).

Finally assume that $|D_2|\ge 2$. We shall prove that this case is impossible.
The connectivity of $G$ implies that there exists a vertex $u\in D_2$ adjacent to same vertex in $D_0$. Let us consider the sets $L=\{t\in L_G\colon d_G(t,u)=2\}$ and $D'=D-
(\{u\}\cup L)$. Now we claim that the set $D'$ is a certified dominating set of $G$.
It is obvious that $L\subseteq L_G\subseteq D_0$ and therefore $D'= (D_2-\{u\})\cup (D_0-L)$ and $V_G-D'= \{x,y,u\}\cup L$. The vertices $x$ and $y$ are dominated by every vertex belonging to the non-empty set $D_2-\{u\}$. If $t\in L$, then its only neighbor is in $D_0-L$ and it dominates both $t$ and $u$. This proves that $D'$ is a dominating set of $G$. Thus it remains to observe that no vertex belonging to $D'$ has exactly one neighbor in $V_G-D'$. This is obvious for every vertex $t$ in  $D_2-\{u\}$, since $x$ and $y$ are two neighbors of $t$ in $V_G-D'$. Thus assume that $t$ is in $D_0-L$. Then, since $D_0-L= (S_G\cup L_G)-L= N_G(L)\cup N_G[L_G-L]$, either $t\in N_G(L)$ or $t\in  N_G[L_G-L]= (L_G-L)\cup N_G(L_G-L)$. If $t\in N_G(L)$, then $u$ and the only vertex in $N_G(t)\cap L$ are two neighbors of $t$ in $V_G-D'$. If $t\in  L_G-L$, then the only element of $N_G(t)$ belongs to $N_G(L_G-L)$ and so $N_G(t)\cap (V_G-D')=\emptyset$ (since $N_G(L_G-L)\subseteq N_G[L_G-L]\subseteq D'$). Finally, if $t\in N_G(L_G-L)$, then $t\in N_G(L_G)-N_G(L)$ and
$N_G(t)\cap (V_G-D')= \emptyset$ (because $V_G-D'= \{x,y,u\}\cup L$, $N_G(\{x,y,u\} \cup L)\subseteq \{x,y\}\cup D_2\cup N_G(L)$, and $t\not\in \{x,y\}\cup D_2\cup N_G(L)$).
This proves that the proper subset $D'=D- (\{u\}\cup L)$ of $D$ is a certified dominating set of $G$, contrary to the minimality of $D$. Therefore the case $|D_2|\ge 2$ is impossible, which completes the proof. \end{proof}

Next, we study the relation between upper domination number and upper certified domination number of a graph. The equality $\gamma(G)=\gamma_\cer(G)$ for any graph $G$ with $\delta(G)\ge 2$ (see Corollary \ref{theorem1-1}) could suggest that the analogous equality holds for the parameters $\Gamma$ and $\Gamma_\cer$.  Graph $G$ of Figure~\ref{fig:example_Gamma_vs_Gammacer} shows that this is not the case. For this graph we have $\delta(G)=2$, $\Gamma(G)=5$ and $\Gamma_\cer(G)=4$, and $\{v_2,v_3,v_6,v_7,v_9\}$ and $\{v_2,v_4,v_6,v_{10}\}$ are examples of $\Gamma$- and $\Gamma_{\cer}$-sets of $G$, respectively. For graphs $G$ with $\delta(G)\ge 2$, in fact, we always have $\Gamma_\cer(G)\le \Gamma(G)$.

\begin{figure}[h!] \begin{center} {\epsfxsize=4.5in \epsffile{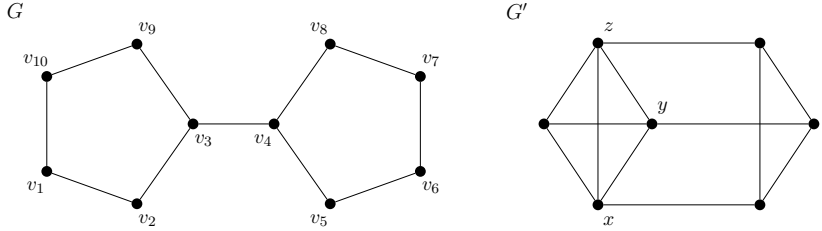}}
        \caption{Graphs $G$ and $G'$. } \label{fig:example_Gamma_vs_Gammacer}
        \label{fig:kontrprzyklad}\end{center}\end{figure}
%Panie Mateuszu, dobra robota!

\begin{lem}\label{lem:inequlity_Gamma_Gammacer} If $G$ is a graph with $\delta(G)\geq 2$, then $\Gamma_{\cer}(G)\leq \Gamma(G)$. \end{lem}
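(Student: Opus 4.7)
The plan is to show that every $\Gamma_{\cer}$-set $D$ is in fact a minimal dominating set, which immediately yields $|D|\le \Gamma(G)$. The starting observation is that the hypothesis $\delta(G)\ge 2$ forces $L_G=\emptyset$, and since every support by definition has a leaf neighbor this also gives $S_G^1=\emptyset$. Hence Lemma~\ref{lem:C} rules out the possibility that some $v\in D$ satisfies $N_G[v]\subseteq D$. Combined with the certification condition (each vertex of $D$ has either zero or at least two neighbors outside $D$), this forces $|N_G(v)\cap(V_G-D)|\ge 2$ for every $v\in D$.

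I will then argue that $D$ must be a minimal dominating set by contradiction. Suppose some $v\in D$ can be removed so that $D'=D-\{v\}$ is still a dominating set of $G$. For any remaining $u\in D'$, one has
\[
N_G(u)\cap(V_G-D') \;\supseteq\; N_G(u)\cap(V_G-D),
\]
so $|N_G(u)\cap(V_G-D')|\ge 2$ by the previous paragraph. Thus $D'$ is itself a certified dominating set of $G$, strictly smaller than $D$, which contradicts the minimality of $D$ as a certified dominating set.

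Therefore $D$ is a minimal dominating set of $G$, and we conclude $\Gamma_{\cer}(G)=|D|\le \Gamma(G)$. The only subtlety worth flagging is that ``minimal certified dominating set'' must forbid $D-\{v\}$ from being certified dominating; this is precisely the definition of a $\Gamma_{\cer}$-set, so no real obstacle arises. The whole argument hinges on the single structural consequence drawn from Lemma~\ref{lem:C} together with $\delta(G)\ge 2$, and everything else is bookkeeping.
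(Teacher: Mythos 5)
Your proof is correct and follows essentially the same route as the paper: invoke Lemma~\ref{lem:C} to conclude that no vertex of a $\Gamma_{\cer}$-set $D$ has its closed neighborhood inside $D$, and then deduce that $D$ is a minimal dominating set. You actually spell out the last deduction (via the monotonicity of the certification condition under vertex removal) more explicitly than the paper, which simply asserts it.
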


\begin{proof} Let $D$ be a $\Gamma_{\cer}$-set of $G$. Since $\delta(G)\geq 2$, it follows from Lemma~\ref{lem:C} that the set $\{v\in V_G\colon N_G[v]\subseteq D\}$ is empty. Hence $D$ is also a minimal dominating set of $G$ which implies that $\Gamma_{\cer}(G) \leq \Gamma(G)$. \end{proof}

Taking into account the above lemma, it is natural then to characterize all graphs $G$ with $\delta(G)\geq 2$ for which $\Gamma (G)=\Gamma_\cer (G)$. Here we have the following theorem.

\begin{thm}\label{thm:Gamma_Gammacer_leafless} Let $G$ be a connected graph with $\delta(G)\geq 2$. If $G$ has an independent $\Gamma$-set, then $\Gamma(G)= \Gamma_\cer(G)$. \end{thm}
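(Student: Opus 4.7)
My plan is to show that the given independent $\Gamma$-set of $G$ is itself a minimal certified dominating set; combined with Lemma~\ref{lem:inequlity_Gamma_Gammacer}, this forces equality.

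First, I would recall that by Lemma~\ref{lem:inequlity_Gamma_Gammacer} we already have $\Gamma_{\cer}(G) \le \Gamma(G)$, so the whole task reduces to establishing the reverse inequality $\Gamma(G) \le \Gamma_{\cer}(G)$. To this end, let $D$ be an independent $\Gamma$-set of $G$, so that $|D| = \Gamma(G)$.

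Next I would verify that $D$ is a certified dominating set. Since $D$ is independent, $N_G(v) \cap D = \emptyset$ for every $v \in D$, hence $N_G(v) \subseteq V_G - D$. Because $\delta(G) \ge 2$, we have $|N_G(v)| \ge 2$, and therefore $|N_G(v) \cap (V_G-D)| = |N_G(v)| \ge 2$ for each $v \in D$. Since $D$ is a dominating set (being a $\Gamma$-set) and each of its vertices has at least two neighbors outside $D$, $D$ satisfies the certified condition.

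The only remaining point is the minimality of $D$ as a certified dominating set, but this is immediate: every certified dominating set is in particular a dominating set, and $D$ is already a minimal dominating set, so no proper subset of $D$ can be certified dominating. Thus $D$ is a minimal certified dominating set, giving $\Gamma_{\cer}(G) \ge |D| = \Gamma(G)$. Combining with the reverse inequality yields $\Gamma(G) = \Gamma_{\cer}(G)$. There is no real obstacle in this argument; the role of the hypotheses is crystal clear, since $\delta(G) \ge 2$ converts independence of $D$ directly into the ``at least two outside'' requirement, while the $\Gamma$-set assumption supplies both the maximality in size and the minimality as a dominating set.
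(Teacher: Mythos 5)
Your proof is correct and follows essentially the same route as the paper: both show the independent $\Gamma$-set is a minimal certified dominating set (independence plus $\delta(G)\ge 2$ giving each vertex two outside neighbors) and then invoke Lemma~\ref{lem:inequlity_Gamma_Gammacer} for the reverse inequality. Your explicit justification of minimality, which the paper leaves implicit, is a welcome touch but not a different argument.
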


\begin{proof} Let $D$ be an independent $\Gamma$-set of $G$. If $v\in D$, then $N_G(v)
\subseteq V_G-D$ (as $D$ is independent), $|N_G(v)|\ge 2$ (as $\delta(G) \ge 2$), and therefore $|N_G(v)\cap (V_G - D)| \ge 2$. This proves that $D$ is a minimal certified dominating set of $G$ and implies the inequality $\Gamma(G)  \le \Gamma_\cer(G)$. The last inequality and Lemma~\ref{lem:inequlity_Gamma_Gammacer} yield the equality
$\Gamma(G)= \Gamma_\cer(G)$. \end{proof}

As we have just seen, for graphs $G$ with $\delta(G)\geq 2$ the equality  $\Gamma(G)=\Gamma_\cer(G)$ holds, if $G$ has an independent $\Gamma$-set. It should be noted that the converse implication, however, is not true. For example, one can check that for the graph $G'$ shown in Figure \ref{fig:kontrprzyklad} is $\delta(G')\geq 2$ and  $\Gamma(G')= \Gamma_\cer(G')=3$, but the only $\Gamma$-set $\{x, y, z\}$ of $G'$ is not independent.

The {\em independence number} $\beta_0(G)$ of a graph $G$ is the cardinality of a largest independent set of vertices of $G$. It is well-known that $\beta_0(G) \le \Gamma(G)$  for every graph $G$. Therefore, by Theorem~\ref{thm:Gamma_Gammacer_leafless}, we immediately have our final corollary.

\begin{wn}\label{cor:Gamma_Gammacer_leafless_beta} If $G$ is a graph with $\delta(G)\geq 2$ and $\beta_0(G)=\Gamma(G)$, then also $\beta_0(G)=\Gamma(G)=\Gamma_\cer(G)$. \end{wn}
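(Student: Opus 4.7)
The plan is to reduce the corollary directly to Theorem~\ref{thm:Gamma_Gammacer_leafless}, whose hypothesis is the existence of an independent $\Gamma$-set. So the entire task is to verify, under the assumptions $\delta(G)\ge 2$ and $\beta_0(G)=\Gamma(G)$, that $G$ actually possesses such a set.

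First I would pick a maximum independent set $I$ of $G$, so that $|I|=\beta_0(G)=\Gamma(G)$ by assumption. Next I would argue that $I$ is a dominating set: since $I$ is a \emph{maximum} independent set it is certainly a \emph{maximal} independent set, and every maximal independent set in a graph is automatically dominating (otherwise one could enlarge it by any undominated vertex). Then I would observe the standard fact that every independent dominating set is a minimal dominating set: for each $v\in I$ the independence of $I$ gives $N_G(v)\cap I=\emptyset$, so $v$ is not dominated by $I-\{v\}$, which means no vertex can be dropped while preserving domination.

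Putting these observations together, $I$ is an independent minimal dominating set of cardinality $\Gamma(G)$, that is, an independent $\Gamma$-set of $G$. Theorem~\ref{thm:Gamma_Gammacer_leafless} now applies and yields $\Gamma(G)=\Gamma_\cer(G)$, which together with the standing equality $\beta_0(G)=\Gamma(G)$ gives the desired $\beta_0(G)=\Gamma(G)=\Gamma_\cer(G)$.

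There is essentially no obstacle here; the only point one must be careful with is distinguishing the two kinds of minimality (independence-maximal versus domination-minimal) and noting how they interact, but both are classical. The assumption $\delta(G)\ge 2$ is not used in the reduction itself; it is absorbed by the invocation of Theorem~\ref{thm:Gamma_Gammacer_leafless}, whose proof relies on $\delta(G)\ge 2$ to guarantee that each vertex of the independent $\Gamma$-set has at least two neighbors outside it.
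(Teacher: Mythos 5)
Your proof is correct and follows the same route as the paper: the paper derives the corollary immediately from Theorem~\ref{thm:Gamma_Gammacer_leafless} together with the well-known fact that $\beta_0(G)\le\Gamma(G)$, which rests on exactly the argument you spell out (a maximum independent set is maximal, hence dominating, hence a minimal dominating set, hence an independent $\Gamma$-set when its size equals $\Gamma(G)$). You simply make explicit the standard details that the paper leaves implicit.
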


The equality of the parameters $\beta_0(G)$ and $\Gamma(G)$ has been studied by a number of authors (see for instance \cite{CHF94} and \cite[pp. 80-84]{HHS98}, and the references there) for well-known families of graphs, including strongly perfect graphs and their different subclasses: bipartite graphs, chordal graphs, and circular arc graphs, just to name a few. For all such graphs $G$, the equality $\Gamma(G)=\Gamma_\cer(G)$ is true if $\delta(G)\geq 2$.

\section{Closing open problems}
We close with the following list of open problems that we have yet to settle.

\begin{Problem} Determine the class of graphs $G$ for which $\gamma_{\rm cer}(G)=
\Gamma_{\rm cer}(G)$. \end{Problem}

\begin{Problem} Determine all the trees $T$ for which $\gamma_{\rm cer}(T)=
\gamma(T)$. \end{Problem}

\begin{Problem} Let $a, b, c, d$ be positive integers with $a\le b\le c \le d$.
Find necessary and sufficient conditions on  $a, b, c, d$ such that there exists
a graph $G$ with $\gamma(G)=a$, $\Gamma(G)=b$, $\gamma_{\rm cer}(G)=c$ and $\Gamma_{\rm cer}(G)=d$. Similarly, find necessary and sufficient conditions on  $a, b, c, d$ such that there exists a graph $G$ with $\gamma(G)=a$, $\gamma_{\rm cer}(G)=b$, $\Gamma(G)=c$ and $\Gamma_{\rm cer}(G)=d$. Finally, find necessary and sufficient conditions on  $a, b, c, d$ such that there exists a graph $G$ with $\gamma(G)=a$, $\gamma_{\rm cer}(G)=b$, $\Gamma_{\rm cer}(G)=c$ and $\Gamma(G)=d$. \end{Problem}

\end{document}